\DeclareMathOperator*{\Res}{Res}
\theoremstyle{plain}
	\newtheorem{thm}{Theorem}
	\newtheorem{lemma}[thm]{Lemma}
	\newtheorem{prop}[thm]{Proposition}
	\newtheorem*{corollary*}{Corollary}
	\newtheorem*{mainlemma}{Main Lemma}
\theoremstyle{definition}
	\newtheorem{definition}[thm]{Definition}
	\newtheorem*{definition*}{Definition}
\theoremstyle{remark}
	\newtheorem*{rk*}{Remark}
	\newtheorem{rk}{Remark}
	\newtheorem*{example}{Example}
\title{Computation of Jeffrey-Kirwan residues using Gr\"obner bases}
\date{}
\author{Zsolt Szil\'agyi}
\address{Section de Math\'ematiques, Universit\'e de Gen\`eve, Gen\`eve, Suisse.}
\email{zsolt.szilagyi@unige.ch}
\begin{document}
\maketitle

%:Abstract
\begin{abstract}
The Jeffrey-Kirwan residue is a powerful tool for computation of intersection numbers or volume of symplectic quotients. In this article, we give an algorithm to compute it using Gr\"obner bases. Our result is parallel to that of \cite{CD} for Grothendieck residues.
\end{abstract}

\section{Introduction}

The Jeffrey-Kirwan residue was introduced in \cite{JKi1} and it is a powerful tool to compute intersection numbers or symplectic volume of symplectic quotients. There are several ways to compute it such as iterated residues, inverse Laplace transforms or nested sets  (\cite{JKi2}, \cite{JKo}, \cite{BV}, \cite{SzV}, \cite{dCP}).

The aim of this article is to give an algorithm for computation Jeffrey-Kirwan residue using Gr\"obner bases which can be implemented as a computer program.

The contents of the article are as follows. In Section \ref{Sec2} we quickly review basic notions related to Gr\"obner bases. In Section \ref{Sec3} we recall an algorithm for computation of Grothendieck residue from \cite{CD} which is similar to ours.
 In Section \ref{Sec4} we generalize ideas from previous section which will be the core of our algorithm. In Section \ref{Sec5} we recall the definition and properties of Jeffrey-Kirwan residue. In Section \ref{Sec6} we apply the results of Section \ref{Sec2} in the case of Jeffrey-Kirwan residue and at the end of the section we give an algorithm to compute it using Gr\"obner bases.
\vskip1ex
%:Acknowledgement
\noindent {\bf Acknowledgement.}
The author is grateful to Andr\'as Szenes and Mich\`ele Vergne for discussions and useful comments. The support of FNS grant 132873 is gratefully acknowledged.

\section{Gr\"obner bases and the division algorithm}\label{Sec2}

In this section we recall some notions and results from the book \cite{CLO} about Gr\"obner bases and division of multivariate polynomials. Let $\mathbb{K}$ be a field ($\mathbb{R}$ or $\mathbb{C}$) and consider the polynomial ring $\mathbb{K}[x_{1},\ldots,x_{r}]$. We use the notation $x^{a}$, $a=(a_{1}, \ldots, a_{r})\in\mathbb{N}^{r}$ for the monomial $x_{1}^{a_{1}}\cdots x_{r}^{a_{r}}$, as special case $x^{0}=1$. 
\begin{definition*}
%\note{Def.1 pp.53 and Cor. 6 pp.70 [CLO].}%
A {\it monomial order} $``>"$ on $\mathbb{K}[x_{1},\ldots, x_{r}]$ is an order on monomials with properties:
	{
	\renewcommand{\labelenumi}{(\roman{enumi})}
	\begin{enumerate}
\item it is a total order,
\item $x^{a} > x^{b}$ implies that $x^{a+c} > x^{b+c}$ for all $c\in \mathbb{N}^{r}$,
\item it is a well-ordering on $\mathbb{N}^{r}$, that is every non-empty subset of $\mathbb{N}^{r}$ has a smallest element.
	\end{enumerate}
	}
\end{definition*}
\begin{example}
The lexicographic order is defined as follows. Let $a=(a_{1},\ldots, a_{r})$ and $b=(b_{1},\ldots,b_{r})$ in $\mathbb{N}^{r}$. Then $x^{a}>_{\textnormal{lex}}x^{b}$ if in $a-b=(a_{1}-b_{1},\ldots, a_{r}-b_{r})$ the leftmost non-zero entry is positive. 
\end{example}
	\begin{definition*}
Fix a monomial order and let $\displaystyle f=\sum_{a\in A}c_{a}x^{a}$ be a non-zero polynomial in $\mathbb{K}[x_{1},\ldots,x_{r}]$. Let $x^{m}$ be the greatest monomial of the set $\{x^{a}\ |\ a\in A\}$ with respect to the fixed monomial order. Then $\textsc{Lm}(f)=x^{m}$ is called the {\it leading monomial} of $f$. Moreover, we call $\textsc{Lt}(f)=c_{m}x^{m}$ the {\it leading term} of $f$ and $\textsc{Lc}(f)=c_{m}$ is called the {\it leading coefficient} of $f$. Obviously $\textsc{Lt}(f)=\textsc{Lc}(f)\cdot \textsc{Lm}(f)$.
If $I$ is an ideal in $\mathbb{K}[x_{1},\ldots, x_{r}]$ then the {\it ideal of leading terms} $\textsc{Lt}(I)=\langle \textsc{Lt}(f)  \,|\,  f\in I  \rangle$ is the ideal generated by the leading terms of all polynomials in $I$.  
	\end{definition*}
	\begin{definition*}
A finite subset $G$ of $I$ is called a {\it Gr\"obner basis} of $I$ with respect to the chosen monomial order if $\textsc{Lt}(I)=\langle \textsc{Lt}(g)  \,|\,  g\in G \rangle$.
	\end{definition*}
\noindent
In particular, $G$ generates the ideal $I$ (\cite{CLO}, I.\S5 Corollary 6). 
It is not unique since any finite subset of $I$ containing a Gr\"obner basis is also a Gr\"obner basis. However, for each polynomial ideal with a monomial order there is a unique {\it reduced Gr\"obner basis} $G$ of $I$ with properties: 
{
\renewcommand{\labelenumi}{(\roman{enumi})}
\begin{enumerate}
\item $\textsc{Lc}(g)=1$ for all $g\in G$,
\item for all $g\in G$ no monomial of $g$ belongs to $\big\langle \textsc{Lt}(G\setminus\{g\}) \big\rangle$.
\end{enumerate}
}%
\noindent
Gr\"obner bases of a polynomial ideal can be computed by Buchberger's algorithm (\cite{CLO}, $\textnormal{ II.\S 7}$).

Next, we recall the division of multivariate polynomials  (\cite{CLO},  $\textnormal{II}.\S 3).$ We fix a monomial order on $\mathbb{K}[x_{1},\ldots,x_{r}]$ and let $F=\{f_{1},\ldots,f_{s}\}$ be an ordered $s$-tuple of polynomials. Using the following division algorithm any polynomial $f$ can be written as $f=a_{1}f_{1} + \ldots + a_{s}f_{s}+R$ with $a_{1},\ldots,a_{s},R\in \mathbb{K}[x_{1},\ldots, x_{r}]$ such that no monomials of $R$ is divisible by any of $\textsc{Lt}(f_{1}),\ldots,\textsc{Lt}(f_{s})$. The polynomial $R$ is called the {\it remainder} of the division of $f$ by $F$. 
The division algorithm goes as follows. In the beginning we set $R$ to $f$ and $a_{1},\ldots, a_{s}$ to $0$. Suppose we have written 
$$
f=a_{1}f_{1}+\ldots + a_{s}f_{s} + R
$$ 
for some $a_{1},\ldots,a_{s},R$. Choose the greatest monomial $M$ in $R$ (with respect to the fixed monomial order) which is divisible by $\textsc{Lt}(f_{i})$ with smallest $i$. Then set the new $a_{i}$ to $\displaystyle a_{i} + \frac{\textsc{Lt}(M)}{\textsc{Lt}(f_{i})}$ and the new $R$ to $\displaystyle R-\frac{\textsc{Lt}(M)}{\textsc{Lt}(f_{i})}f_{i}$.
The algorithm ends in finite steps since the order of monomials in $R$ which are divisible by any of $\textsc{Lt}(f_{1}),\ldots,\textsc{Lt}(f_{s})$ strictly decrease and as a consequence of well-ordering every strictly decreasing sequence eventually terminates.

The remainder of the division in general depends on the way the elements of $F$ are listed. Moreover, it is possible that the remainder is non-zero even if $f\in \langle f_{1}, \ldots, f_{s} \rangle$. Such an example is given in Example 5  of \cite{CLO}, $\textnormal{II}.\S 3$. Nevertheless, if $F=\{f_{1},\ldots, f_{s}\}$ is a Gr\"obner basis of the ideal $\langle f_{1},\ldots, f_{s} \rangle$ then the remainder $R$ does not depend on the way the elements of $F$ are listed. 
	\begin{definition*}
Fix a monomial order. The {\it normal form} $N_{I}(f)$ of $f$ with respect to the ideal $I$ is a polynomial such that $f-N_{I}(f) \in I$ and no monomial of $N_{I}(f)$ is contained in $\textsc{Lt}(I)$. 
	\end{definition*}
	\begin{prop}\label{A}
If $G=\{g_{1},\ldots,g_{s}\}$ is a Gr\"obner basis of the ideal $I$ then the remainder of the division of $f$ by $G$ is equal to the normal form of $f$, that is $R=N_{I}(f)$. Moreover, $f\in I$ if and only if $R=0$ (compare \cite{CLO}, $ \textnormal{II}.\S 6$, Proposition 1 and Corollary 2).
	\end{prop}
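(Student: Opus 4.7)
The proposition bundles two assertions: that the division remainder $R$ coincides with the normal form $N_I(f)$, and that membership in $I$ is detected by the vanishing of $R$. The plan is to first establish that a normal form exists and is unique, then identify it with the output of the division algorithm, and finally read off the membership criterion.

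For existence, I would take the output of the division algorithm: the algorithm, applied with $F=G$, already produces a decomposition $f = a_1 g_1 + \ldots + a_s g_s + R$ in which $f - R \in I$ and no monomial of $R$ is divisible by any $\textsc{Lt}(g_i)$. The key observation needed to promote this into the normal-form property is that $\textsc{Lt}(I) = \langle \textsc{Lt}(g_1), \ldots, \textsc{Lt}(g_s)\rangle$ is a monomial ideal, and a monomial lies in a monomial ideal if and only if it is divisible by one of the generating monomials (this is the standard lemma on monomial ideals; I would cite it from \cite{CLO}). Thus ``no monomial of $R$ is divisible by any $\textsc{Lt}(g_i)$'' is equivalent to ``no monomial of $R$ belongs to $\textsc{Lt}(I)$,'' so $R$ is a valid normal form of $f$.

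For uniqueness, suppose $N_1$ and $N_2$ both satisfy the defining properties of $N_I(f)$. Then $N_1 - N_2 = (f - N_2) - (f - N_1) \in I$. If $N_1 - N_2 \neq 0$, its leading monomial would lie in $\textsc{Lt}(I)$; but that leading monomial must appear among the monomials of $N_1$ or of $N_2$, contradicting the assumption that neither polynomial has any monomial in $\textsc{Lt}(I)$. Hence $N_1 = N_2$, and in particular the division remainder $R$ equals $N_I(f)$; as a by-product this also recovers the independence of $R$ from the ordering of $G$.

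For the final assertion, if $f \in I$ then the zero polynomial vacuously satisfies the normal-form conditions, so by uniqueness $N_I(f) = 0$ and thus $R = 0$. Conversely, if $R = 0$ then $f = a_1 g_1 + \ldots + a_s g_s \in I$ directly. The main conceptual obstacle is the identification of ``not divisible by any $\textsc{Lt}(g_i)$'' with ``not in $\textsc{Lt}(I)$''; once the monomial-ideal lemma is in hand, the rest is essentially bookkeeping, and the Gröbner basis hypothesis is used precisely at this single point.
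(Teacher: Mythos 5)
Your proof is correct and follows essentially the same route as the source the paper cites (\cite{CLO}, II.\S6): the paper itself gives no proof beyond that citation, and your argument --- division remainder satisfies the normal-form conditions via the monomial-ideal divisibility lemma, uniqueness by the leading-term contradiction, membership criterion as a corollary --- is exactly the standard one. No gaps.
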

	\begin{rk}\label{Rk0}
If $I$ is a homogeneous polynomial ideal then for any monomial order we can choose a Gr\"obner basis $G=\{g_{1},\ldots,g_{s}\}$ with $g_{i}$ homogeneous polynomials. Moreover, by the division algorithm 
the remainder of the division of a homogeneous polynomial $f$ by $G$ will be homogeneous of the same degree as $f$. Hence, $N_{I}(f)$ is homogeneous of the same degree as $f$.
	\end{rk}
	\begin{lemma}\label{Lem0}
Let $I$ be a homogeneous polynomial ideal with a monomial order. Then $I$ and $\textsc{Lt}(I)$ are isomorphic as graded vectors spaces, i.e. $\dim I_{d} = \dim \textsc{Lt}(I)_{d}$ for all $d$ (\cite{CLO}, $ \textnormal{IX}.\S 3$, Proposition 4).
	\end{lemma}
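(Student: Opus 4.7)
The plan is to exhibit, in each degree $d$, a common vector-space complement of $I_d$ in $\mathbb{K}[x_{1},\ldots,x_{r}]_{d}$ and of $\textsc{Lt}(I)_{d}$ in $\mathbb{K}[x_{1},\ldots,x_{r}]_{d}$; then the two quotients have equal dimensions, and therefore so do $I_{d}$ and $\textsc{Lt}(I)_{d}$.

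First I would invoke Remark \ref{Rk0} to pick a homogeneous Gr\"obner basis $G=\{g_{1},\ldots,g_{s}\}$ of $I$. For each $d$, consider the degree-$d$ part of the set $B_{d}=\{x^{a} \,|\, |a|=d, \ x^{a}\notin \textsc{Lt}(I)\}$ of \emph{standard monomials}. Using the division algorithm together with Proposition \ref{A}, the normal form $N_{I}$ is a $\mathbb{K}$-linear map from $\mathbb{K}[x_{1},\ldots,x_{r}]_{d}$ to the span of $B_{d}$ whose kernel is exactly $I_{d}$ (again by Proposition \ref{A}, since $R=0$ iff $f\in I$); moreover the normal form is degree-preserving by the homogeneity observation in Remark \ref{Rk0}. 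Thus every $f\in \mathbb{K}[x_{1},\ldots,x_{r}]_{d}$ decomposes uniquely as $f=(f-N_{I}(f))+N_{I}(f)$ with $f-N_{I}(f)\in I_{d}$ and $N_{I}(f)\in \mathrm{span}_{\mathbb{K}}(B_{d})$, so
$$\mathbb{K}[x_{1},\ldots,x_{r}]_{d}=I_{d}\oplus \mathrm{span}_{\mathbb{K}}(B_{d}).$$

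Next I would make the analogous statement for $\textsc{Lt}(I)$. Since $\textsc{Lt}(I)$ is by definition generated by monomials, it is a monomial ideal; its degree-$d$ part is therefore spanned by those monomials of degree $d$ which lie in $\textsc{Lt}(I)$, and hence
$$\mathbb{K}[x_{1},\ldots,x_{r}]_{d}=\textsc{Lt}(I)_{d}\oplus \mathrm{span}_{\mathbb{K}}(B_{d}).$$
Combining the two direct sum decompositions, both $I_{d}$ and $\textsc{Lt}(I)_{d}$ have the same codimension, equal to $|B_{d}|$, in the finite-dimensional space $\mathbb{K}[x_{1},\ldots,x_{r}]_{d}$, so $\dim I_{d}=\dim \textsc{Lt}(I)_{d}$ for every $d$.

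The only subtle point is the first decomposition: one must know that the normal form lands in the span of the standard monomials of the \emph{same} degree and that its kernel is precisely $I_{d}$. Both facts follow from Proposition \ref{A} and the degree-preserving property of the division algorithm recorded in Remark \ref{Rk0}, so there is no genuine obstacle once we are allowed to use those results.
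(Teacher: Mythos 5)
Your argument is correct. Note that the paper itself gives no proof of this lemma --- it simply cites \cite{CLO}, IX.\S 3, Proposition 4 --- and what you have written is essentially the standard argument behind that reference (Macaulay's basis theorem): the standard monomials $B_{d}$ are simultaneously a complement of $\textsc{Lt}(I)_{d}$ (trivially, since $\textsc{Lt}(I)$ is a monomial ideal) and of $I_{d}$ (via the degree-preserving normal form). The only point you leave implicit is that the sum $I_{d}+\mathrm{span}_{\mathbb{K}}(B_{d})$ is direct, i.e.\ $I_{d}\cap \mathrm{span}_{\mathbb{K}}(B_{d})=\{0\}$; this is immediate, since a nonzero element of $I$ has its leading monomial in $\textsc{Lt}(I)$ and so cannot be supported on $B_{d}$ (equivalently, an element of $\mathrm{span}_{\mathbb{K}}(B_{d})$ is its own normal form, while elements of $I_{d}$ have normal form $0$ by Proposition \ref{A}). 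With that one line added, the proof is complete.
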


\section{Review of Grothendieck residue case}\label{Sec3}
In this section we review an algorithm from \cite{CD} for computation of Grothendieck residue with Gr\"obner bases. Let $H, P_{1}, \ldots, P_{r} \in\mathbb{C}[x_{1},\ldots,x_{r}]$ be polynomials of degree $d, d_{1},\ldots,d_{r}$, respectively. 
	\begin{definition*}
Suppose that $0 \in \mathbb{C}^{r}$ is the only common zero of $P_{1},\ldots, P_{r}$ in an open neighborhood $U$ of it. The {\it Grothendieck residue} is defined as 
$$
\textnormal{Res}_{0}\left( \frac{H}{ P_{1} | \ldots | P_{r}} \right) = 
\frac{1}{(2\pi\sqrt{-1})^{r}}\int_{\Gamma(\varepsilon)}\frac{H(x)}{P_{1}(x)\cdots P_{r}(x)} \, \textnormal{d} x_{1}\ldots \textnormal{d} x_{r},
$$
where $\Gamma(\varepsilon)=\{ x\in U \,\big|\, |P_{i}(x)|=\varepsilon_{i}>0,\ i=1,\ldots,r \}$ and it is oriented by $\textnormal{d}(\arg P_{1}(x)) \wedge \ldots \wedge \textnormal{d}(\arg P_{r}(x))$.
	\end{definition*}
\noindent The Grothendieck residue has the following properties.
{%
\renewcommand{\labelenumi}{(\roman{enumi})}%
	\begin{enumerate}%
	\item It is independent on small $\varepsilon=(\varepsilon_{1},\ldots,\varepsilon_{r})$.
	
	\item If $P_{1},\ldots, P_{r}$ and $H$ are homogeneous polynomials then  using rescaling $x\mapsto \lambda x$ we can see that the residue $\displaystyle\textnormal{Res}_{0}\left( \frac{H}{P_{1}|\ldots |P_{r}} \right)=0$ if $d \neq \sum_{i=1}^{r}(d_{i}-1)$. 

 \item (Local duality) $\displaystyle\textnormal{Res}_{0} \left( \frac{H K}{P_{1}|\ldots|P_{r}} \right) = 0$ for all $K\in \mathbb{C}[x_{1},\ldots,x_{r}]$ if and only if $H$ lies in the ideal $\langle P_{1},\ldots, P_{r} \rangle$. 
 
\item (Transformation law) Let $Q_{1},\ldots,Q_{r}\in \mathbb{C}[x_{1},\ldots,x_{r}]$ such that $0$ is their only common zero locally. 
If we can write $Q_{i}=\sum_{j=1}^{r}a_{ij} P_{j}$ then
%and 
$$
\textnormal{Res}_{0} \left( \frac{H}{P_{1}|\ldots | P_{r}} \right) = \textnormal{Res}_{0} \left( \frac{H\det \big( [a_{ij}]_{i,j=1}^{r} \big)}{Q_{1}|\ldots |Q_{r}} \right).
$$
\item (Euler-Jacobi vanishing theorem) Let $ P_{1},\ldots, P_{r}$ such that	 $0\in\mathbb{C}^{r}$ is the only common zero. If $d < \sum_{i=1}^{r} (d_{i}-1)$ then
$\displaystyle\textnormal{Res}_{0}\left( \frac{H}{P_{1}|\ldots|P_{r}} \right) = 0.$
\end{enumerate}
}
As noted in \cite{CD} $\S 1.5.6$ the Grothendieck residue can be computed using normal forms as follows. Suppose that $P_{1},\ldots, P_{r}$ are homogeneous with only common zero $0\in\mathbb{C}^{r}$.
{
\renewcommand{\labelenumi}{(\arabic{enumi})}
	\begin{enumerate}
	\item Compute a Gr\"obner basis of the ideal $I=\langle P_{1},\ldots,P_{r} \rangle$.
	\item Compute normal forms $N_{I}(H)$ of $H$ and $N_{I}(\Delta)$ of $\Delta = \det \big( [a_{ij}]_{i,j=1}^{r} \big)$, where $P_{i}=	\sum_{j=1}^{r}a_{ij}x_{j}$ for all $i=1,\ldots, r$. 
	\item $\displaystyle\textnormal{Res}_{0} \left( \frac{H}{P_{1}|\ldots | P_{r}} \right) = \frac{N_{I}(H)}{N_{I}(\Delta)}$, since $\displaystyle\textnormal{Res}_{0} \left( \frac{\Delta}{P_{1}|\ldots | P_{r}} \right) = \textnormal{Res}_{0} \left( \frac{1}{x_{1}|\ldots | x_{r}} \right) $ $ = 1$ by the transformation law.
	\end{enumerate}
	}
\noindent
Let $p_{1},\ldots,p_{r}\in \mathbb{C}[x_{1},\ldots,x_{r}]$ be (non-homogeneous) polynomials of degree $d_{1},\ldots,d_{r}$. Denote $q_{i}$ the degree $d_{i}$ part of $p_{i}$ and assume that $0\in \mathbb{C}^{r}$ is the only common zero of $q_{1},\ldots,q_{r}$. Let 
$$
P_{i}(x_{0},x_{1},\ldots,x_{r})=x_{0}^{d_{i}} \cdot p_{i}\left(\frac{x_{1}}{x_{0}},\ldots, \frac{x_{r}}{x_{0}}\right)\in \mathbb{C}[x_{0},x_{ 1},\ldots,x_{r}]
$$ be the homogenization of $p_{i}$. Let $h\in\mathbb{C}[x_{1},\ldots,x_{r}]$ be a degree $d$ polynomial. If $d< \sum_{i=1}^{r}(d_{i}-1)$ then $\displaystyle\textnormal{Res}_{0} \left( \frac{h}{p_{1}|\ldots|p_{r}} \right)=0$ by Euler-Jacobi theorem. Therefore, suppose that $d\geq \sum_{i=1}^{r}(d_{i}-1)$ and let 
$$
H(x_{0},x_{1},\ldots,x_{r})=x_{0}^{d} \cdot h\left( \frac{x_{1}}{x_{0}},\ldots, \frac{x_{r}}{x_{0}} \right)
$$
be the homogenization of $h$. Moreover, let $P_{0}(x_{0},\ldots,x_{r})=x_{0}^{d_{0}}$ with $d_{0}=d+1 - \sum_{i=1}^{r} (d_{i}-1)$. Then
$$
\textnormal{Res}_{0} \left( \frac{h}{p_{1} | \ldots | p_{r}} \right)
=
\textnormal{Res}_{0}\left( \frac{H}{P_{0}|P_{1}|\ldots|P_{r}} \right)
$$
and we can apply the above algorithm to compute it.
\section{The main lemma}\label{mainidea}\label{Sec4}
Let $\mathbb{K}$ be a field and denote $\mathbb{K}[x]$ the polynomial ring $\mathbb{K}[x_{1},\ldots,x_{r}]$. Denote $\mathbb{K}[x]_{d}$ the vector space of homogeneous polynomials of degree $d$. Consider a $\lambda: \mathbb{K}[x]_{d}\to \mathbb{K}$ non-zero linear functional and let $I\subset \mathbb{K}[x]$ be a homogeneous ideal such that 
$$
\ker \lambda = I\cap \mathbb{K}[x]_{d}.
$$ 
%For an ideal $I$ we introduce notation $I_{d}=I\cap\mathbb{K}[x]_{d}$.

%
	\begin{mainlemma}\label{B}
Let $\Delta\in \mathbb{K}[x]_{d}$ such that $\lambda(\Delta)\neq 0$. Then $N_{I}(\Delta)\neq 0$ and for all $P\in \mathbb{K}[x]_{d}$ we have
$$\frac{\lambda(P)}{\lambda(\Delta)}=\frac{N_{I}(P)}{N_{I}(\Delta)}.$$ 
	\end{mainlemma}
	\begin{proof}
Since $\lambda(\Delta)\neq 0$, $\Delta$ is not contained in $I$, thus  $N_{I}(\Delta)$ is a non-zero homogeneous polynomial of degree $d$ by Proposition \ref{A} and Remark \ref{Rk0}.  
Since  $ I_{d}=I\cap \mathbb{K}[x]_{d}$ is $1$-codimensional in $\mathbb{K}[x]_{d}$, the vector space $\textsc{Lt}(I)_{d}$ is also one codimensional by Lemma \ref{Lem0}. 
%Indeed, if $M_{1}, M_{2}\in \mathbb{K}[x]_{d}\setminus \textsc{Lt}(I)_{d}$ monomials then $\lambda(M_{2})\neq 0$ and there is $c\in \mathbb{K}$ such that $M_{1}-cM_{2}\in \ker \lambda = I_{d}$, hence $M_{1}$ or $M_{2}$ belongs to $\textsc{Lt}(I)_{d}$.
%
Therefore, for all $P\in \mathbb{K}[x]_{d}$ the normal form $N_{I}(P)$ is a multiple of the unique monomial $M$ of degree $d$ which is not contained in  $\textsc{Lt}(I)_{d}$, that is $N_{I}(P)=\textsc{Lt}(N_{I}(P))=\textsc{Lc}(N_{I}(P))\cdot M$.  
Finally, for all $P\in \mathbb{K}[x]_{d}$ we have
$$
\lambda(P) 
= \lambda\big(N_{I}(P)\big) 
=  \textsc{Lc}(N_{I}(P))\cdot \lambda\big(\textsc{Lm}(N_{I}(P))\big)
= \textsc{Lc}(N_{I}(P))\cdot \lambda\big(M\big),
$$
and
$$
\frac{\lambda(P)}{\lambda(\Delta)}
= 
\frac{\textsc{Lc}(N_{I}(P))\cdot \lambda(M)}{\textsc{Lc}(N_{I}(\Delta))\cdot \lambda(M)} 
= 
\frac{\textsc{Lc}(N_{I}(P))\cdot M}{\textsc{Lc}(N_{I}(\Delta))\cdot M} 
= 
\frac{N_{I}(P)}{N_{I}(\Delta)}.
$$
\end{proof}
\noindent
If we can find an ideal $I$ such that $I_{d}=\ker\lambda$ and if we can compute a non-zero $\lambda(\Delta)$ for some $\Delta$ then we can also compute $\lambda(P)$ for all $P\in \mathbb{K}[x]_{d}$ using the following algorithm:
{
\renewcommand{\labelenumi}{(\arabic{enumi})}
	\begin{enumerate}
	\item compute a Gr\"obner basis $G$ of $I$,
	\item compute normal forms $N_{I}(P)\>$and $N_{I}(\Delta)\>$as remainders of the division of $P$ and $\Delta$ by $G$,	
	\item finally, $\lambda(P)= \lambda(\Delta) \cdot \displaystyle\frac{N_{I}(P)}{N_{I}(\Delta)}\in \mathbb{K}$.
	\end{enumerate}
}
\noindent
%In the case of Jeffrey-Kirwan residue we have to compute the ideal $I$ and we have to find a $\Delta$.

\section{The Jeffrey-Kirwan residue}\label{Sec5}

We recall the definition and properties of Jeffrey-Kirwan residue following \cite{JKo}. Let $V$ be an $r$-dimensional real vector space and let $\mathfrak{A}=[\alpha_{1},\ldots, \alpha_{n}]$ be a collection of (not necessarily distinct) non-zero vectors in $V^{*}$. We consider $\alpha_{i}$'s as linear functions on $V$. Let $\Lambda$ be a connected component of $V\setminus \bigcup_{i=1}^{n}\alpha_{i}^{\perp}$, where $\alpha_{i}^{\perp} = \{v\in V\ |\ \alpha_{i}(v)=0 \}$. Remark that for all $i$, either $\alpha_{i}\in \Lambda^{\vee}$ or $-\alpha_{i} \in \Lambda^{\vee}$, where $\Lambda^{\vee}=\{\beta \in V^{*}\ |\ \beta(v) > 0,\ \forall v\in\Lambda\}$ is the dual cone of $\Lambda$. When $\alpha_{i}\in \Lambda^{\vee}$ we say that $\alpha_{i}$ is {\it polarized}. Let $\xi\in\Lambda$ and choose a basis $\{x_{1},\ldots, x_{r}\}$ of $V^{*}$ such that $x_{1}(\xi)=1$, and $x_{2}(\xi)=\ldots=x_{r}(\xi)=0$. Let $\varepsilon = \varepsilon_{1}x_{1}+\ldots + \varepsilon_{r}x_{r}\in V^{*}$ and let $P\in \mathbb{R}[V]$ be a polynomial. 
	\begin{definition*}
We  define
$$
\textnormal{Res}^{+}_{x_{1}} \frac{P(x) e^{\varepsilon(x)}}{\prod_{i=1}^{n}\alpha_{i}(x)} \textnormal{d} x_{1}
=
\begin{cases}
\Res\limits_{x_{1}=\infty}\dfrac{P(x)e^{\varepsilon(x)}}{\prod_{i=1}^{n} \alpha_{i}(x)}\textnormal{d} x_{1} & \textnormal{if }\varepsilon_{1}\geq 0,
\\
0 & \textnormal{if }\varepsilon_{1}<0,
\end{cases}
$$
considering $x_{2},\ldots,x_{r}$ as constants while taking the residue with respect to $x_{1}$. 
We define the Jeffrey-Kirwan residue
$$
\textnormal{JKRes}^{\Lambda} \frac{P(x) e^{\varepsilon(x)}}{\prod_{i=1}^{n}\alpha_{i}(x)} \textnormal{d} x
= 
\frac{1}{\sqrt{\det[(x_{i},x_{j})]_{i,j=1}^{r}}} 
%\cdot 
\textnormal{Res}^{+}_{x_{r}} \left( \ldots \left( \textnormal{Res}^{+}_{x_{1}}\, \frac{P(x)e^{\varepsilon(x)}}{\prod_{i=1}^{n}\alpha_{i}(x)} \textnormal{d} x_{1} \right)\ldots \right)\textnormal{d} x_{r}, 
$$
where $\det[(x_{i},x_{j})]_{i,j=1}^{r}$ is the Gram determinant with respect to a fixed scalar product on $V^{*}$.
	\end{definition*}
	\begin{definition*}
We call an element of $V^{*}$
{\it regular} (with respect to $\mathfrak{A}$) if it does not lie on any $(r-1)$- or less dimensional subspace of $V^*$ spanned by subsets of $\mathfrak{A}$.
	\end{definition*}
\noindent
The Jeffrey-Kirwan residue has the following properties (\cite{JKi2}, Proposition 3.2).
{
\renewcommand{\labelenumi}{(\arabic{enumi})}
\begin{enumerate}
\item[(P1)]\label{P1} Suppose that $\alpha_{1},\ldots, \alpha_{n}\in \Lambda^{\vee}$ and $\varepsilon$ is regular. Let $P\in \mathbb{R}[V]$ be a homogeneous polynomial of degree $d$. Then
$$
\textnormal{JKRes}^{\Lambda} \frac{P(x) e^{\varepsilon(x)}}{\prod_{i=1}^{n}\alpha_{i}(x)} \textnormal{d} x = 0
$$
unless all the following properties are satisfied
\begin{enumerate}
\item $\{\alpha_{1},\ldots, \alpha_{n}\}$ spans $V^{*}$ as vector space,
\item $d \leq n-r$,
\item $\varepsilon\in Cone(\alpha_{1},\ldots, \alpha_{n}) = \big\{ \sum_{i=1}^{r}a_{i}\alpha_{i} \,|\, a_{1},\ldots,a_{r}\geq 0 \big\} $.
\end{enumerate}
\item[(P2)]\label{P2} Suppose that $d\leq n-r$ and $P$ is homogeneous polynomial of degree $d$. Then
\begin{align*}
\textnormal{JKRes}^{\Lambda} \frac{P(x) e^{\varepsilon(x)}}{\prod_{i=1}^{n}\alpha_{i}(x)} \textnormal{d} x  
&{}= 
\sum_{k\geq0}\lim_{t\to 0^{+}} \textnormal{JKRes}^{\Lambda} \frac{\varepsilon(x)^{k} P(x) e^{t \varepsilon(x)}}{k!\,\prod_{i=1}^{n}\alpha_{i}(x)} \textnormal{d} x 
\\&
{}= 
 \textnormal{JKRes}^{\Lambda} \frac{\varepsilon(x)^{n-r-d} P(x) e^{\varepsilon(x)}}{(n-r-d)!\,\prod_{i=1}^{n}\alpha_{i}(x)} \textnormal{d} x .
\end{align*}
\item[(P3)]\label{P3} If $d=0$, $n=r$ and properties (P1)(a)-(c) are satisfied then
$$
\textnormal{JKRes}^{\Lambda}  \frac{e^{\varepsilon(x)}}{\prod_{i=1}^{r}\alpha_{i}(x)} \textnormal{d} x  = \frac{1}{|\det(\alpha_{1},\ldots,\alpha_{r})|} = \frac{1}{\sqrt{\det[(\alpha_{i},\alpha_{j})]_{i,j=1}^{r}}},
$$
where $(\alpha_{1},\ldots,\alpha_{r})$ denotes the matrix whose columns are the coordinates of $\alpha_{1}, \ldots ,\alpha_{r}$ with respect to any orthonormal basis of $V^{*}$.
\end{enumerate}
}
For $\varepsilon$ regular the Jeffrey-Kirwan residue does not depend on the choice of $\xi \in \Lambda$ and the corresponding basis $\{ x_{1}, \ldots, x_{r}\}$ of $V^{*}$.

We fix $\mathfrak{A}=[\alpha_{1},\ldots,\alpha_{n}]$ and we assume that its elements are polarized, that is $\alpha_{1},\ldots,\alpha_{n}\in \Lambda^{\vee}$ for some $\Lambda$. We also fix $\varepsilon\in V^{*}$ regular with respect to $\mathfrak{A}$. By property (P2) the residue $\displaystyle\textnormal{JKRes}^{\Lambda}\frac{P(x) e^{\varepsilon(x)}}{\prod_{i=1}^{n}\alpha_{i}(x)}\textnormal{d}x$ is determined by values for $P$ homogeneous polynomials of degree $n-r$. Thus, we are interested in computation of the linear functional $P\mapsto \displaystyle\textnormal{JKRes}^{\Lambda} \frac{P(x) e^{\varepsilon(x)}}{\prod_{i=1}^{n}\alpha_{i}(x)}\textnormal{d}x$ on $\mathbb{R}[V]_{n-r}$. Let $R_{\mathfrak{A}}=\mathbb{R}[V]_{\mathfrak{A}}$ be the vector space of rational functions of form $\displaystyle\frac{P}{\prod_{i=1}^{n}\alpha_{i}^{m_{i}}}$, where $P\in \mathbb{R}[V]$ and $m_{i}\geq0$.
A subset $J \subset \{1,\ldots,n\}$ is called {\it generating} if $\{\alpha_{j}\ |\ j\in J\}$ spans $V^{*}$ as vector space. 
We have the following partial fraction decomposition (more generally, Theorem 1 of \cite{BV}).
\begin{prop}\label{Prop1}
Let $G_{\mathfrak{A}}\subset R_{\mathfrak{A}}$ be spanned as vector space by fractions of form $\displaystyle\frac{1}{\prod_{i=1}^{n}\alpha_{i}^{m_{i}}}$ such that the set $J=\{i\ |\ m_{i}>0\}$ is generating. Moreover, let $NG_{\mathfrak{A}}\subset R_{\mathfrak{A}}$ be the vector space spanned by fractions of form $\displaystyle\frac{P}{\prod_{i=1}^{n}\alpha_{i}^{m_{i}}}$ such that the set $J=\{i \ |\ m_{i}>0 \}$ is non-generating. Then we have a direct sum decomposition
$$
R_{\mathfrak{A}} = G_{\mathfrak{A}}\oplus NG_{\mathfrak{A}}.
$$
\end{prop}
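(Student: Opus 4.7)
The plan is to establish the direct sum in two stages: first, that every element of $R_{\mathfrak{A}}$ lies in $G_{\mathfrak{A}}+NG_{\mathfrak{A}}$; second, that $G_{\mathfrak{A}}\cap NG_{\mathfrak{A}}=0$.

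For the spanning statement, I would induct on $\deg P$ for a generator $\frac{P}{\prod_{i=1}^{n}\alpha_{i}^{m_{i}}}$ of $R_{\mathfrak{A}}$ with support $J=\{i : m_{i}>0\}$. If $J$ is non-generating then the fraction already lies in $NG_{\mathfrak{A}}$, so suppose $J$ is generating. Pick a basis $\alpha_{j_{1}},\ldots,\alpha_{j_{r}}$ of $V^{*}$ from $\{\alpha_{i}\}_{i\in J}$; since $\mathbb{R}[V]$ is a polynomial algebra in any basis of $V^{*}$, one can expand $P=c+\sum_{s=1}^{r}\alpha_{j_{s}}Q_{s}$ with $c\in\mathbb{R}$ and $\deg Q_{s}<\deg P$. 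The constant piece contributes $\frac{c}{\prod\alpha_{i}^{m_{i}}}\in G_{\mathfrak{A}}$, while each remaining summand cancels a factor of the denominator to become $\frac{Q_{s}}{\alpha_{j_{s}}^{m_{j_{s}}-1}\prod_{i\neq j_{s}}\alpha_{i}^{m_{i}}}$, a fraction of strictly smaller numerator degree. The inductive hypothesis then finishes the argument, the base case $\deg P=0$ being trivial.

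For the directness, suppose $f\in G_{\mathfrak{A}}\cap NG_{\mathfrak{A}}$ is non-zero. My strategy is to produce linear functionals on $R_{\mathfrak{A}}$ that annihilate $NG_{\mathfrak{A}}$ yet detect some non-zero contribution to $f$ on the $G_{\mathfrak{A}}$ side. Following \cite{BV} I would use iterated residues: for a generic ordered basis $(y_{1},\ldots,y_{r})$ of $V^{*}$ consider
$$
\Res_{y_{r}=0}\cdots\Res_{y_{1}=0}\, f\, \textnormal{d}y_{1}\cdots \textnormal{d}y_{r}.
$$
A standard vanishing lemma asserts that when the denominator support $K$ is non-generating, $\bigcap_{i\in K}\alpha_{i}^{\perp}$ is positive-dimensional, and by choosing the flag transversally to a non-zero vector in this intersection one of the inner residues encounters a holomorphic dependence and vanishes; hence the whole functional kills $NG_{\mathfrak{A}}$. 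Conversely, by adapting the flag to a generating support $J_{0}$ that contributes non-trivially to the $G_{\mathfrak{A}}$-representation of $f$, the corresponding iterated residue evaluates to a non-zero multiple of a coefficient $\frac{1}{\prod_{j\in J_{0}}\alpha_{j}^{m_{j}}}$, contradicting $f\in NG_{\mathfrak{A}}$.

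The hard part is undoubtedly directness: one must exhibit enough iterated-residue functionals to separate generating supports while uniformly killing non-generating ones. This bookkeeping of flags versus supports is precisely the content of Theorem~1 of \cite{BV}, so in practice I would either invoke that result directly or isolate the minimal flag-residue lemma required for the argument above. The spanning step, by contrast, is a routine induction that merely exploits the polynomial algebra structure on $V^{*}$.
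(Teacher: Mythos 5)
The paper offers no proof of Proposition~\ref{Prop1} at all: it simply attributes the decomposition to Theorem~1 of \cite{BV}. Your proposal therefore does strictly more than the paper. The spanning half is correct and complete: writing $P=c+\sum_{s}\alpha_{j_{s}}Q_{s}$ in a basis $\{\alpha_{j_{s}}\}$ extracted from a generating support and inducting on $\deg P$ is the standard reduction, and it correctly handles the case $m_{j_{s}}=1$ (where $j_{s}$ leaves the support and the new support may become non-generating, landing the term in $NG_{\mathfrak{A}}$). The directness half is only a sketch, and you are right that this is where all the difficulty lives. Two points to be careful about if you wanted to make it self-contained: (i) a single generic flag does not uniformly kill $NG_{\mathfrak{A}}$ and simultaneously detect every generating contribution, so one must either adapt the flag to each support or prove a genuine generic-vanishing lemma; (ii) a bare iterated residue $\Res_{y_{r}}\cdots\Res_{y_{1}}$ is a functional that vanishes on every homogeneous component of degree $\neq -r$, so to separate elements of $G_{\mathfrak{A}}$ of degree $<-r$ (fractions with some $m_{i}>1$) you must use the functionals $f\mapsto\Res(Qf)$ for polynomial multipliers $Q$, and then verify these still annihilate $NG_{\mathfrak{A}}$ (which they do, since $NG_{\mathfrak{A}}$ is a $\mathbb{R}[V]$-submodule). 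That bookkeeping is exactly the content of \cite{BV}, Theorem~1, which you cite; since the paper itself defers the whole proposition to that reference, your partial deferral is acceptable, and your explicit spanning argument is a genuine addition.
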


Denote $(R_{\mathfrak{A}})_{d}$ the degree $d$ part of $R_{\mathfrak{A}}$, $d\in \mathbb{Z}$. In particular, $(R_{\mathfrak{A}})_{d} = (G_{\mathfrak{A}})_{d} \oplus (NG_{\mathfrak{A}})_{d}$ and $(G_{\mathfrak{A}})_{-r}$ is spanned by fractions $\displaystyle\frac{1}{\alpha_{i_{1}}\cdots \alpha_{i_{r}}}$ such that $\{\alpha_{i_{1}},\ldots,\alpha_{i_{r}}\}$ is a basis of $V^{*}$. In general, these fractions are not linearly independent, for example $\displaystyle\frac{1}{x(x+y)}=\frac{1}{xy}-\frac{1}{(x+y)y}$.
Nevertheless, following \cite{SzV} we can define
	\begin{definition}\label{Def1}
Fix a scalar product on $V^{*}$ and consider the linear functional $\textnormal{JK}_{\varepsilon}$ on $R_{\mathfrak{A}}$ which vanishes on $R_{\mathfrak{A}}\setminus (G_{\mathfrak{A}})_{-r}$ and on $(G_{\mathfrak{A}})_{-r}$ is defined by
$$
\textnormal{JK}_{\varepsilon}\left( \frac{1}{\alpha_{i_{1}}\cdots \alpha_{i_{r}}} \right) 
= \begin{cases}
\dfrac{1}{\sqrt{\det[(\alpha_{i_{k}},\alpha_{i_{l}})]_{k,l=1}^{r}}}
 & 
 \textnormal{if }\varepsilon\in Cone(\alpha_{i_{1}},\ldots,\alpha_{i_{r}}),
\\
0 & \textnormal{otherwise.}
\end{cases} 
$$
	\end{definition}
By Proposition \ref{Prop1} and properties (P1), (P3) if $P\in \mathbb{R}[V]$ homogeneous polynomial of degree $n-r$ then
\begin{equation}\label{Eq10}
\textnormal{JK}_{\varepsilon} \left( \frac{P}{\prod_{i=1}^{n}\alpha_{i}} \right) 
= 
\textnormal{JKRes}^{\Lambda}\frac{P(x) e^{\varepsilon(x)}}{\prod_{i=1}^{n}\alpha_{i}(x)}\textnormal{d}x,
\end{equation}
consequently $\textnormal{JK}_{\varepsilon}$ is well-defined.

	\begin{rk*}
 In the case of $V^*=\mathbb{R}^r$ with the standard scalar product, $\mathfrak{A}=[\alpha_1,\ldots,\alpha_n]$ a generating subset of $\mathbb{Z}^r$ and 
$\varepsilon$ regular the Jeffrey-Kirwan residues can be interpreted as intersection numbers on the toric variety $X_{\mathfrak{c}}(\mathfrak{A})$: 
$
\displaystyle\textnormal{JK}_\varepsilon \left( \frac{P}{\prod_{i=1}^n \alpha_i} \right)
=
\int_{X_{\mathfrak{c}}(\mathfrak{A})}\chi(P),
$ 
where $\mathfrak{c}$ is the connected component of regular elements containing $\varepsilon$ and 
$
\chi: \mathbb{R}[x_1,\ldots,x_r]\to H^{\bullet}(X_{\mathfrak{c}}(\mathfrak{A}))
$ 
is a degree preserving ring homomorphism ($\deg x_i =2$) as explained in \cite{SzV}, $\S\textnormal{1}$ and $\S\textnormal{2}$.  
\end{rk*}

%\newpage

%
\section{Computation of Jeffrey-Kirwan residue using Gr\"obner bases}\label{Sec6}
To apply the result of Section \ref{Sec4} for the Jeffrey-Kirwan residue we have to find a homogeneous ideal $\mathcal{I}_{\mathfrak{A},\varepsilon}$ such that for all $P\in \mathbb{R}[V]_{n-r}$ we have $\displaystyle\textnormal{JK}_{\varepsilon}\left(\frac{P}{\prod_{i=1}^{n}\alpha_{i}}\right)=0$ if and only if $P\in \mathcal{I}_{\mathfrak{A},\varepsilon}$, and a $\Delta \in \mathbb{R}[V]_{n-r}$ for which we can compute $\displaystyle\textnormal{JK}_{\varepsilon}\left(\frac{\Delta}{\prod_{i=1}^{n}\alpha_{i}}\right) \neq 0$.
Recall that if $\varepsilon \notin Cone(\alpha_{1},\ldots,\alpha_{n})$ then $\displaystyle\textnormal{JK}_{\varepsilon}\left(\frac{P}{\prod_{i=1}^{n}\alpha_{i}}\right)=0$ for all $P$ by (\ref{Eq10}) and property (P1). 
We will assume that $\varepsilon \in Cone(\alpha_{1},\ldots,\alpha_{n})$, hence there is a basis $\{ \alpha_{i_{1}},\ldots,\alpha_{i_{r}}\}$ such that $\varepsilon\in Cone(\alpha_{i_{1}},\ldots,\alpha_{i_{r}})$, thus $\displaystyle\textnormal{JK}_{\varepsilon}\left(\frac{1}{\alpha_{i_{1}}\cdots \alpha_{i_{r}}}\right) = \frac{1}{\sqrt{\det[(\alpha_{i_{k}},\alpha_{i_{l}})]_{k,l=1}^{r}}} \neq 0$. 
Therefore, let $\displaystyle\Delta = \frac{\prod_{i=1}^{n}\alpha_{i}}{\alpha_{i_{1}}\cdots \alpha_{i_{r}}}$. 
%The following theorem describes an ideal $\mathcal{I}_{\mathfrak{A},\varepsilon}$.
Denote $N=\{1,\ldots,n\}$.

\begin{thm}\label{C}
Let $P\in \mathbb{R}[V]_{n-r}$. Then $\displaystyle\textnormal{JK}_{\varepsilon}\left( \frac{P}{\prod_{i=1}^{n}\alpha_{i}} \right) = 0$ if and only if $P$ belongs to the homogeneous ideal $\mathcal{I}_{\mathfrak{A},\varepsilon} = \big\langle \prod_{j\in J}\alpha_{j}  \,\big|\,  \varepsilon\notin Cone(\alpha_{i} \,|\, i\in N \setminus J)\big\rangle$. More geometrically, $\mathcal{I}_{\mathfrak{A},\varepsilon}$ is generated by all $\prod_{\alpha_{i}\in H^{+}}\alpha_{i}$ where $H^{+}$ is the open half-space containing $\varepsilon$ of a hyperplane $H$ spanned by a subset of $\mathfrak{A}$.
\end{thm}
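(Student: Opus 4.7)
The plan is to split the statement into two inclusions. Writing $\lambda(P) := \textnormal{JK}_{\varepsilon}\bigl(P/\prod_{i=1}^{n}\alpha_{i}\bigr)$ for the linear functional on $\mathbb{R}[V]_{n-r}$ whose kernel is to be identified, note that by property (P3) applied to the basis $\{\alpha_{i_{1}},\dots,\alpha_{i_{r}}\}$, $\lambda(\Delta)\neq 0$, so $\lambda$ is surjective and $\dim\ker\lambda = \dim\mathbb{R}[V]_{n-r}-1$. It therefore suffices to establish (i) $(\mathcal{I}_{\mathfrak{A},\varepsilon})_{n-r}\subset\ker\lambda$ and (ii) the codimension bound $\dim \mathbb{R}[V]_{n-r}/(\mathcal{I}_{\mathfrak{A},\varepsilon})_{n-r}\leq 1$.

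For (i), given a generator $\prod_{j\in J}\alpha_j$ of $\mathcal{I}_{\mathfrak{A},\varepsilon}$ and any $Q\in\mathbb{R}[V]_{n-r-|J|}$, cancellation gives
$$
\frac{Q\prod_{j\in J}\alpha_j}{\prod_{i=1}^{n}\alpha_i}=\frac{Q}{\prod_{i\in N\setminus J}\alpha_i},
$$
which lies in $R_{\mathfrak{A}'}$ for the sub-collection $\mathfrak{A}'=[\alpha_i]_{i\in N\setminus J}$. I would apply Proposition \ref{Prop1} to $\mathfrak{A}'$ and pick out the degree $-r$ part: this decomposes the fraction as $\sum_B c_B/\prod_{i\in B}\alpha_i$ summed over bases $B\subset N\setminus J$ of $V^*$, plus a term in $NG_{\mathfrak{A}'}\subset NG_{\mathfrak{A}}$ (the inclusion holds because ``generating'' means spanning $V^*$, the same criterion for both arrangements). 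For each such basis $B$, $\textnormal{Cone}(\alpha_i\mid i\in B)\subset \textnormal{Cone}(\alpha_i\mid i\in N\setminus J)$ together with the defining hypothesis of the ideal gives $\varepsilon\notin\textnormal{Cone}(\alpha_i\mid i\in B)$, so Definition \ref{Def1} forces each simple fraction to vanish under $\textnormal{JK}_{\varepsilon}$; the $NG$-term is annihilated by definition. Hence $\lambda(Q\prod_{j\in J}\alpha_j)=0$.

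For (ii), the strategy is to show that every degree $n-r$ monomial $\alpha^{\nu}=\prod_i\alpha_i^{\nu_i}$ reduces modulo $\mathcal{I}_{\mathfrak{A},\varepsilon}$ to a scalar multiple of the chosen $\Delta$. If the support $\{i:\nu_i>0\}$ contains any bad $J$, then $\alpha^{\nu}\in\mathcal{I}_{\mathfrak{A},\varepsilon}$ is already zero in the quotient; otherwise its support is contained in the complement of some good basis (one for which $\varepsilon\in \textnormal{Cone}(\alpha_j\mid j\in B)$). A base-exchange argument on pairs of good bases $B,B'$, using the unique linear dependence among $\{\alpha_i:i\in B\cup\{k\}\}$ for any $k\notin B$ multiplied by the appropriate complementary product, should produce congruences $\prod_{j\notin B}\alpha_j\equiv c\cdot\prod_{j\notin B'}\alpha_j \pmod{\mathcal{I}_{\mathfrak{A},\varepsilon}}$, collapsing all squarefree products over good-basis complements to a single class.

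The main obstacle I anticipate is the handling of non-squarefree monomials of degree $n-r$ whose support is a proper subset of some good-basis complement: rewriting such a monomial as a scalar multiple of some squarefree $\prod_{j\notin B}\alpha_j$ modulo $\mathcal{I}_{\mathfrak{A},\varepsilon}$ requires combining base-exchange identities with the generators indexed by bad $J$ in a controlled way, and it is not obvious a priori that this always succeeds. A cleaner route that bypasses this combinatorics is to invoke the interpretation mentioned in the remark after Definition \ref{Def1}: $\lambda$ factors as integration on $X_{\mathfrak{c}}(\mathfrak{A})$ composed with the degree-preserving ring map $\chi$, so the codimension-$1$ bound reduces to Poincar\'e duality for this connected orbifold of complex dimension $n-r$, once $\mathcal{I}_{\mathfrak{A},\varepsilon}$ is matched with $\ker\chi$ by standard Stanley--Reisner-type considerations. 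The geometric reformulation of the generators (via hyperplanes spanned by subsets of $\mathfrak{A}$) is itself a direct consequence of Farkas' lemma separating $\varepsilon$ from $\textnormal{Cone}(\alpha_i\mid i\in N\setminus J)$ by a supporting hyperplane of a maximal face of that cone.
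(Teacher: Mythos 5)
Your reduction of the hard direction to the codimension bound $\dim \mathbb{R}[V]_{n-r}/(\mathcal{I}_{\mathfrak{A},\varepsilon})_{n-r}\leq 1$ is a legitimate reformulation (it is exactly the shape of the Main Lemma), and your inclusion (i) is correct and even slightly cleaner than the paper's, which invokes property (P1) directly rather than Proposition \ref{Prop1} and Definition \ref{Def1}. But the codimension bound is the entire content of the theorem, and you do not prove it. Your base-exchange sketch only addresses squarefree products $\prod_{j\notin B}\alpha_j$ over complements of ``good'' bases, and you yourself flag that non-squarefree monomials, and monomials whose support sits inside a good-basis complement without equalling one, are not covered. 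This is precisely where the work lies: the paper handles it by a double induction --- Proposition \ref{Prop2} (any $\Phi_{\rho}$ decomposes as $a_{\tau}\Phi_{\tau}$ plus fractions $\Phi_{\sigma}$ with $\sigma\in\mathcal{B}(\mathfrak{A},\mathfrak{c})$, proved by wall-crossing and induction on $\dim V^{*}$), combined with an induction on $n$ in which a general $P\in\ker\textnormal{JK}_{\varepsilon}$ is written as $\sum_{i}\alpha_{i}P_{i}$ over a good basis, each $P_{i}$ is corrected by a multiple of $\prod_{l\notin K_{i}}\alpha_{l}$ so that the induction hypothesis applies to the smaller list $\mathfrak{A}_{i}$, and the leftover $Q=\sum_{i}c_{i}\prod_{l\notin K_{i}}\alpha_{l}$ is absorbed into the ideal via Proposition \ref{Prop2}. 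Nothing in your proposal substitutes for this inductive mechanism.

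The fallback you offer --- factoring $\lambda$ through integration on $X_{\mathfrak{c}}(\mathfrak{A})$ and invoking Poincar\'e duality --- does not close the gap either. First, that interpretation is only available when the $\alpha_{i}$ are integral (the remark in the paper is stated for $\mathfrak{A}\subset\mathbb{Z}^{r}$), whereas the theorem concerns arbitrary real vectors. Second, and more seriously, the step you defer to ``standard Stanley--Reisner-type considerations'' --- matching $(\mathcal{I}_{\mathfrak{A},\varepsilon})_{n-r}$ with $\ker\chi$ in top degree --- is essentially equivalent to the statement being proved: one must show that the image of the Stanley--Reisner ideal under $\pi$ cuts out exactly the kernel of the residue functional in degree $n-r$, which is the identification of \cite{SzV} that the theorem provides an independent, elementary proof of. As written, the argument is either incomplete (the combinatorial route) or circular modulo substantial external input (the toric route). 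The geometric reformulation of the generators via separating hyperplanes is fine and matches the paper's argument.
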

Before the proof of the theorem we need some preparation. Denote $\mathcal{H}(\mathfrak{A})$ the set of hyperplanes in $V^{*}$ which are spanned by subsets of $\mathfrak{A}$. Let $\mathcal{C}(\mathfrak{A})$ be the set of connected components of $Cone(\mathfrak{A})\setminus \bigcup_{H\in \mathcal{H}(\mathfrak{A})}H$ and its elements will be called {\it chambers}. For $\varepsilon$ regular with respect to $\mathfrak{A}$ denote $\mathfrak{c}_{\varepsilon}\in \mathcal{C}(\mathfrak{A})$ the chamber containing $\varepsilon$. Denote $\mathcal{B}(\mathfrak{A})$ the set of all bases $\sigma$ of $V^{*}$ such that $\sigma\subset \mathfrak{A}$ and let $\mathcal{B}(\mathfrak{A},\mathfrak{c}) = \{ \sigma \in \mathcal{B}(\mathfrak{A})  \,|\,  \mathfrak{c} \not\subset Cone(\sigma) \}$. To any basis $\sigma \in \mathcal{B}(\mathfrak{A})$ we associate a fraction $\displaystyle\Phi_{\sigma} = \frac{1}{\prod_{\alpha \in \sigma}\alpha} \in (G_{\mathfrak{A}})_{-r}$.

	\begin{rk*} 
\noindent
\begin{enumerate}
\item For any $\sigma \in \mathcal{B}(\mathfrak{A})$ we have $\mathfrak{c} \cap Cone(\sigma)\neq \emptyset$ if and only if $\mathfrak{c} \subset Cone(\sigma)$. 

\item  Since we have $\varepsilon\in Cone(\alpha_{i_{1}},\ldots,\alpha_{i_{r}})$ exactly when $\mathfrak{c}_{\varepsilon} \subset Cone(\alpha_{i_{1}},\ldots,\alpha_{i_{r}})$ for any basis $\{\alpha_{i_{1}},\ldots,\alpha_{i_{r}}\}$, therefore the $\displaystyle\textnormal{JK}_{\varepsilon}\left( \frac{P}{\prod_{i=1}^{n} \alpha_{i}}\right)$ depends only on the chamber $\mathfrak{c}_{\varepsilon}$, not on the particular vector $\varepsilon$ by Propostion \ref{Prop1} and Definition \ref{Def1}. 

\end{enumerate}
	\end{rk*}

The main tool in the proof of Theorem \ref{C} will be the following proposition.

\begin{prop}\label{Prop2}
Let $\mathfrak{c} \in \mathcal{C}(\mathfrak{A})$ and fix $\tau \in \mathcal{B}(\mathfrak{A})$ such that $\mathfrak{c} \subset  Cone(\tau)$. Then for any $\rho \in \mathcal{B}(\mathfrak{A})$ we have a decompostion of fractions 
$$
\Phi_{\rho} = a_{\tau} \Phi_{\tau} + \sum_{\sigma \in \mathcal{B}(\mathfrak{A},\mathfrak{c})}a_{\sigma} \Phi_{\sigma}.
$$
\end{prop}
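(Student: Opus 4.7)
The plan is to reduce to the case $\mathfrak{c} \subset Cone(\rho)$ (otherwise $\rho \in \mathcal{B}(\mathfrak{A}, \mathfrak{c})$ itself, so $\Phi_\rho = 0 \cdot \Phi_\tau + 1 \cdot \Phi_\rho$ already has the required form) and then to chain together single-element \emph{flip identities} until one reaches $\tau$. For any basis $\sigma = \{\beta_1, \ldots, \beta_r\} \subset \mathfrak{A}$ and any $\gamma \in \mathfrak{A} \setminus \sigma$ expanded as $\gamma = \sum_{j=1}^{r} c_j \beta_j$, dividing through by $\gamma \prod_i \beta_i$ yields
\[
\Phi_\sigma \;=\; \sum_{j : c_j \neq 0} c_j \Phi_{\sigma_j}, \qquad \sigma_j := (\sigma \setminus \{\beta_j\}) \cup \{\gamma\}.
\]
This is the only algebraic relation among the $\Phi_\sigma$ that I will need.

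Let $W$ denote the span of $\{\Phi_\sigma : \sigma \in \mathcal{B}(\mathfrak{A}, \mathfrak{c})\}$. The geometric lemma supplying the pivoting rule is: if $\mathfrak{c} \subset Cone(\sigma)$ and $\gamma \in \mathfrak{A} \setminus \sigma$, then at most one of the $\sigma_j$ satisfies $\mathfrak{c} \subset Cone(\sigma_j)$. Fixing a regular $\varepsilon \in \mathfrak{c}$ with $\sigma$-coordinates $(d_1, \ldots, d_r)$ (all strictly positive), a direct calculation shows that $\varepsilon \in Cone(\sigma_j)$ precisely when $c_j > 0$ and $d_j/c_j = \min\{d_k/c_k : c_k > 0\}$; regularity of $\varepsilon$ (it avoids every hyperplane $\{d_j c_k = d_k c_j\}$ spanned by a subset of $\mathfrak{A}$) forces this minimum to be attained at a unique index $j^*$, if attained at all. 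Modulo $W$ the flip identity therefore collapses to $\Phi_\sigma \equiv c_{j^*} \Phi_{\sigma_{j^*}}$ (or $\Phi_\sigma \equiv 0$ if no good flip exists).

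The main argument is then straightforward: starting from $\rho$ and iteratively choosing $\gamma \in \tau \setminus \sigma_{\textnormal{current}}$, the unique good flip produces a sequence $\rho = \sigma_0, \sigma_1, \ldots, \sigma_k = \tau$ of good bases connected by flips, with $\Phi_{\sigma_i} \equiv c^{(i)} \Phi_{\sigma_{i+1}} \pmod{W}$ at each step. Chaining these congruences gives $\Phi_\rho \equiv (c^{(0)} \cdots c^{(k-1)}) \Phi_\tau \pmod{W}$, which is exactly the required decomposition with $a_\tau = c^{(0)} \cdots c^{(k-1)}$.

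The main obstacle is termination. The unique good flip may remove some $\beta_{j^*} \in \tau$ from the current basis while inserting $\gamma \in \tau$, leaving the na\"ive distance $|\sigma_i \setminus \tau|$ unchanged (this already happens in simple $3$-dimensional examples). I would handle this by adopting an anti-cycling pivot rule on the good-flip graph: fix a total order on $\mathfrak{A}$ and always flip in the smallest available $\gamma \in \tau \setminus \sigma_i$ (Bland's rule), which is the standard device guaranteeing that the simplex procedure adapted to the chamber $\mathfrak{c}$ terminates at $\tau$ in finitely many steps.
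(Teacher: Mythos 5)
Your flip identity and the uniqueness of the ``good'' flip are exactly the content of the paper's Lemma \ref{Lem1}, and the reduction modulo $W=\operatorname{span}\{\Phi_{\sigma}\,:\,\sigma\in\mathcal{B}(\mathfrak{A},\mathfrak{c})\}$ is sound: every flip does give $\Phi_{\sigma}\equiv c_{j^{*}}\Phi_{\sigma_{j^{*}}}\pmod{W}$. The gap is termination, and it is not a technicality: the pivot rule you propose provably cycles. Take $V^{*}=\mathbb{R}^{3}$, $\varepsilon=(0,0,1)$, and the polarized list $\alpha_{1}=(-2,1,1)$, $\alpha_{2}=(-2,-1,1)$, $\alpha_{3}=(4,0,1)$, $\alpha_{4}=(1,1,1)$, $\alpha_{5}=(1,-1,1)$, with $\tau=\{\alpha_{1},\alpha_{2},\alpha_{3}\}$ and $\rho=\{\alpha_{1},\alpha_{4},\alpha_{5}\}$. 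One checks that $\varepsilon$ is regular and lies in the interiors of $Cone(\tau)$, $Cone(\alpha_{1},\alpha_{4},\alpha_{5})$ and $Cone(\alpha_{2},\alpha_{4},\alpha_{5})$, but not in $Cone(\alpha_{1},\alpha_{2},\alpha_{4})$ or $Cone(\alpha_{1},\alpha_{2},\alpha_{5})$. Starting from $\rho$, your rule enters $\alpha_{2}$ (the smallest index of $\tau\setminus\rho$); the unique good flip expels $\alpha_{1}$ and yields $\{\alpha_{2},\alpha_{4},\alpha_{5}\}$; the rule then enters $\alpha_{1}$, whose unique good flip expels $\alpha_{2}$ and returns to $\rho$. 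The walk oscillates forever and never reaches $\tau$ (which is reachable here, but only by entering $\alpha_{3}$ first). Bland's rule is an anti-cycling device for simplex pivots whose entering column is selected by a negative reduced cost; it gives no guarantee for a scheme whose entering column is selected by membership in $\tau$, which is not a descent rule for any objective function.

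The chaining strategy can be salvaged, but only by running the honest simplex method: minimize $\sum_{j\notin\tau}x_{j}$ over $\{x\geq 0\,:\,\sum_{j}x_{j}\alpha_{j}=\varepsilon\}$ starting from the feasible basis $\rho$, allowing \emph{arbitrary} entering columns with negative reduced cost. Regularity of $\varepsilon$ is precisely non-degeneracy, so the objective strictly decreases at every pivot, the method terminates at the unique optimal basis $\tau$, each pivot is one of your good flips, and chaining the congruences modulo $W$ finishes the proof (with $a_{\tau}>0$). That would be a genuinely different, and arguably more self-contained, route than the paper's, which instead argues by induction on $\dim V^{*}$: Lemma \ref{Lem2} settles the proposition for chambers whose closure meets a facet of $Cone(\mathfrak{A})$ by passing to the $(r-1)$-dimensional arrangement on the supporting hyperplane, and a wall-crossing argument transports the statement from chamber to chamber. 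As written, however, your termination step fails and the proof is incomplete.
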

We will prove it by induction on $\dim V^{*}$ and wall-crossing, but first we prove it in two particular cases.

	\begin{lemma}\label{Lem1} 
Let $\beta_{0},\ldots,\beta_{r}\in \mathfrak{A}$. 
If $\mathfrak{c}\subset  Cone(\beta_{1},\ldots,\beta_{r})$ then there is a unique $l>0$ such that $\mathfrak{c} \subset Cone(\beta_{0},\ldots,\widehat{\beta_{l}},\ldots,\beta_{r})$. In particular, if $\beta_{0} = \sum_{i=1}^{r}b_{i}\beta_{r}$ then 
	\begin{equation}\label{Eq0}
\frac{1}{\beta_{0} \cdots \widehat{\beta_{l}}\cdots\beta_{r}}
 =  
 \frac{b_{l}^{-1}}{\beta_{1} \cdots \beta_{r}} 
 - 
 \sum_{1\leq i \neq l \leq r}  \frac{b_{i}b_{l}^{-1}}{\beta_{0}\cdots \widehat{\beta_{i}}\cdots\beta_{r}},
	\end{equation}
and $\{\beta_{0},\ldots, \widehat{\beta_{i}},\ldots,\beta_{r}\} \in \mathcal{B}(\mathfrak{A},\mathfrak{c})$ for all $1\leq i\neq l \leq r$.
	\end{lemma}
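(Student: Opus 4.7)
The plan is to reduce both the existence-uniqueness claim for $l$ and the verification of (\ref{Eq0}) to a single coordinate computation inside the chamber. Fix $\xi \in \mathfrak{c}$. Since $\mathfrak{c} \subset Cone(\beta_1,\ldots,\beta_r)$ and $\mathfrak{c}$ is a connected component of $Cone(\mathfrak{A}) \setminus \bigcup_{H \in \mathcal{H}(\mathfrak{A})} H$, in the expansion $\xi = \sum_{i=1}^{r} c_i \beta_i$ every $c_i$ is strictly positive; moreover, for any basis $\{\beta_0, \widehat{\beta_l}, \ldots, \beta_r\}$ of $V^*$, the containment $\mathfrak{c} \subset Cone(\beta_0, \widehat{\beta_l}, \ldots, \beta_r)$ is equivalent to all coordinates of $\xi$ in that basis being strictly positive (the boundary hyperplanes of the cone lie in $\mathcal{H}(\mathfrak{A})$, which $\mathfrak{c}$ avoids).

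For $l$ with $b_l \neq 0$, substituting $\beta_l = b_l^{-1}\bigl(\beta_0 - \sum_{i\neq l} b_i \beta_i\bigr)$ into $\xi$ yields
$$
\xi = \frac{c_l}{b_l}\,\beta_0 + \sum_{\substack{1\leq i \leq r \\ i\neq l}}\left(c_i - \frac{c_l}{b_l}\,b_i\right)\beta_i.
$$
Strict positivity of all coefficients is thus equivalent to $b_l>0$ together with $c_i/b_i > c_l/b_l$ for every $i\neq l$ with $b_i>0$. At least one $b_i$ is strictly positive: since $\mathfrak{A}$ is polarized, evaluating $\beta_0 = \sum b_i\beta_i$ at any $v\in \Lambda$ gives $\beta_0(v)>0$ with all $\beta_i(v)>0$, ruling out $b_i\leq 0$ for every $i$. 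Choosing $l$ to minimize $c_l/b_l$ over $\{l:b_l>0\}$ makes the displayed coefficients non-negative, and regularity of $\mathfrak{c}$ (no coefficient may vanish, else $\xi$ would lie on a hyperplane spanned by a subset of $\mathfrak{A}$) upgrades them to strictly positive. This proves existence.

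Uniqueness is the main obstacle, and is where regularity of $\mathfrak{c}$ is indispensable. If two distinct indices $l \neq l'$ both produced cones containing $\mathfrak{c}$, strict positivity of coordinates in both bases would force $b_l, b_{l'} > 0$ and yield simultaneously $c_{l'}/b_{l'} > c_l/b_l$ (from the $l$-case applied at $i=l'$) and the symmetric inequality $c_l/b_l > c_{l'}/b_{l'}$, a contradiction.

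Finally, (\ref{Eq0}) is a purely algebraic consequence of $\beta_0=\sum_{i=1}^r b_i\beta_i$: clearing denominators in the right-hand side produces $b_l^{-1}\beta_0 - \sum_{i\neq l}b_l^{-1}b_i\beta_i$, which is exactly $\beta_l$ by the defining relation. For $i\neq l$ with $b_i=0$ the summand vanishes (and $\{\beta_0,\widehat{\beta_i},\ldots,\beta_r\}$ is in any case linearly dependent since then $\beta_0\in\mathrm{span}(\beta_j:j\neq i)$); for $i\neq l$ with $b_i\neq 0$ the membership $\{\beta_0,\ldots,\widehat{\beta_i},\ldots,\beta_r\}\in\mathcal{B}(\mathfrak{A},\mathfrak{c})$ is immediate from the uniqueness of $l$.
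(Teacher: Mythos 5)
Your proof is correct and follows essentially the same route as the paper's: expand a regular point of $\mathfrak{c}$ in the basis $\{\beta_1,\ldots,\beta_r\}$, characterize cone membership by strict positivity of the new coordinates, use polarization to produce some $b_i>0$, and select $l$ by an extremal ratio (your minimization of $c_l/b_l$ over $\{l : b_l>0\}$ is the same criterion as the paper's maximization of $b_i/e_i$), with regularity ruling out ties. Your explicit handling of the $b_i=0$ terms in \eqref{Eq0} is a small point the paper glosses over, but otherwise the arguments coincide.
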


	\begin{proof}
Let $\varepsilon \in \mathfrak{c}$. Then $\varepsilon\in Cone(\beta_{1},\ldots, \beta_{r})$ and $\varepsilon$ is regular with respect to $\mathfrak{A}$. It follows that $\{\beta_{1},\ldots,\beta_{r}\}$ is a basis of $V^{*}$, hence we can write $\varepsilon = \sum_{i=1}^{r} e_{i}\beta_{i}$ with $e_{1},\ldots,e_{r}>0$ and $\beta_{0}= \sum_{i=1}^{r} b_{i}\beta_{i}$. For any $l\in \{1,\ldots, r\}$ with $b_{l}\neq 0$ we have
$$
\varepsilon = \frac{e_{l}}{b_{l}}\beta_{0} + \sum_{1\leq i\neq l \leq r} \frac{e_{i}b_{l}-e_{l}b_{i}}{b_{l}} \beta_{i}.
$$
Thus, $\varepsilon \in Cone(\beta_{0},\ldots,\widehat{\beta_{l}},\ldots,\beta_{r})$ exactly when
	\begin{equation}\label{Eq1}
\frac{e_{l}}{b_{l}} \geq 0 \qquad \textnormal{and} \qquad \frac{e_{i}b_{l}-e_{l}b_{i}}{b_{l}} \geq 0
	\end{equation}
for all $i=1,\ldots,\widehat{l},\ldots,r$.
Since $\varepsilon$ is regular with respect to $\{\beta_{0},\ldots,\beta_{r}\}$ we have that $\dfrac{e_{l}}{b_{l}}\neq 0$ and  $\dfrac{e_{i}b_{l}-e_{l}b_{i}}{b_{l}} \neq 0$ for all $i=1,\ldots,\widehat{l},\ldots, r$, and consequently,
	\begin{equation}\label{Eq2}
\frac{b_{l}}{e_{l}}\neq \frac{b_{i}}{e_{i}}
	\end{equation}
for all $i=1,\ldots,\widehat{l},\ldots,r$. Hence inequalities (\ref{Eq1}) are equivalent to
	\begin{equation}\label{Eq3}
b_{l}>0 \qquad \textnormal{and} \qquad \frac{b_{l}}{e_{l}} > \frac{b_{i}}{e_{i}}
	\end{equation}
for all $i=1,\ldots,\widehat{l},\ldots,r$, because $e_{1},\ldots,e_{r}>0$. 

We may suppose that $\displaystyle\frac{b_{1}}{e_{1}} \leq \ldots \leq \frac{b_{r-1}}{e_{r-1}} \leq \frac{b_{r}}{e_{r}}$. Since $\beta_{0},\ldots,\beta_{r}\in \mathfrak{A}$ are polarized there exists $\xi\in V$ such that $\beta_{0}(\xi), \ldots, \beta_{r}(\xi)>0$ and from $0 < \beta_{0}(\xi) = \sum_{i=1}^{r} b_{i}\beta_{i}(\xi)$ follows that there are $i$ with $b_{i}>0$. In particular, we must have $b_{r}>0$, because $e_{1},\ldots,e_{r}>0$. Moreover, by (\ref{Eq2}) we have $\displaystyle\frac{b_{i}}{e_{i}}< \frac{b_{r}}{e_{r}}$ for all $i=1,\ldots,r-1$, therefore $\varepsilon\in Cone(\beta_{0},\ldots,\widehat{\beta_{l}},\ldots,\beta_{r})$ if and only if $l=r$.

Finally, dividing the relation $\displaystyle\beta_{l} = \frac{\beta_{0}}{b_{l}} - \sum_{1\leq i \neq l \leq r} \frac{b_{i}}{b_{l}}\beta_{i}$ by $\beta_{0}\cdots \beta_{r}$ we get (\ref{Eq0}) and from the first part follows that $\{\beta_{0}, \ldots, \widehat{\beta_{i}},\ldots, \beta_{r}\}\in \mathcal{B}(\mathfrak{A},\mathfrak{c})$ for all $i=1,\ldots,\widehat{l},\ldots,r$.
 \end{proof}

\begin{lemma}\label{Lem2}
If Proposition \ref{Prop2} holds when $\dim V^{*}<r$ then it also holds if $\dim V^{*}=r$ and $\mathfrak{c}\in \mathcal{C}(\mathfrak{A})$ 
is a chamber such that its closure $\overline{\mathfrak{c}}$ intersects a face $F$ of $Cone(\mathfrak{A})$ in an $(r-1)$-dimensional polyhedral cone.
	\end{lemma}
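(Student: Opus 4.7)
The plan is to prove the decomposition for each $\rho \in \mathcal{B}(\mathfrak{A})$ by cases. If $\rho \in \mathcal{B}(\mathfrak{A},\mathfrak{c})$, take $a_\rho = 1$ and all other coefficients zero. The substance lies in the case $\mathfrak{c} \subset Cone(\rho)$, where the assumption that $F$ is a face of $Cone(\mathfrak{A})$ will force a rigid structure on $\rho$ and enable a reduction to dimension $r-1$.

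Let $H$ be the hyperplane spanned by $F$, set $\mathfrak{A}' = \mathfrak{A} \cap H$, and let $\mathfrak{c}'$ be the chamber of the arrangement of $\mathfrak{A}'$ in $H$ whose closure equals $\overline{\mathfrak{c}} \cap F$. Since $F$ is a face of $Cone(\mathfrak{A})$, every element of $\mathfrak{A} \setminus \mathfrak{A}'$ lies strictly on the side of $H$ containing $\mathfrak{c}$. The first structural claim is: whenever $\mathfrak{c} \subset Cone(\rho)$, exactly $r-1$ elements of $\rho$ lie in $H$. Indeed, a relative-interior point $p$ of the $(r-1)$-dimensional set $\overline{\mathfrak{c}} \cap F$ lies on $\partial Cone(\mathfrak{A})$, so no $V^*$-neighborhood of $p$ is contained in $Cone(\mathfrak{A})$; since $Cone(\rho) \subset Cone(\mathfrak{A})$ this forces $p \in \partial Cone(\rho)$. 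This $(r-1)$-dimensional piece of the boundary sits in a facet of the simplicial cone $Cone(\rho)$, hence $H$ is spanned by $r-1$ elements of $\rho$. Write $\rho = \rho' \sqcup \{\alpha_0^\rho\}$ and $\tau = \tau' \sqcup \{\alpha_0^\tau\}$ with $\rho',\tau' \subset \mathfrak{A}'$ and $\alpha_0^\rho,\alpha_0^\tau \notin H$.

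The second step reduces to the case $\alpha_0^\rho = \alpha_0^\tau$. If they differ, apply Lemma \ref{Lem1} with $\beta_0 = \alpha_0^\tau$ and $\{\beta_1,\ldots,\beta_r\} = \rho$. The unique index $l>0$ produced by the lemma must satisfy $\beta_l = \alpha_0^\rho$: the new good basis $\{\beta_0,\ldots,\widehat{\beta_l},\ldots,\beta_r\}$ still supports $\mathfrak{c}$, so by the structural claim $r-1$ of its elements lie in $H$, and its sole element outside $H$ is $\beta_0 = \alpha_0^\tau$. Identity (\ref{Eq0}) then writes $\Phi_\rho$ as a scalar multiple of $\Phi_{\rho' \cup \{\alpha_0^\tau\}}$ plus a sum of $\Phi_\sigma$ with $\sigma \in \mathcal{B}(\mathfrak{A},\mathfrak{c})$, reducing to the case $\alpha_0^\rho = \alpha_0^\tau$.

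In that reduced case factor $\Phi_\rho = \Phi_{\rho'}/\alpha_0^\tau$ and apply the induction hypothesis to the arrangement $\mathfrak{A}'$ on $H$ at the chamber $\mathfrak{c}'$ and basis $\tau'$ (the inclusion $\mathfrak{c}' \subset Cone(\tau')$ follows from $\mathfrak{c} \subset Cone(\tau)$ by intersecting with $H$): this gives $\Phi_{\rho'} = a'_{\tau'}\Phi_{\tau'} + \sum_{\sigma' \in \mathcal{B}(\mathfrak{A}',\mathfrak{c}')}a'_{\sigma'}\Phi_{\sigma'}$. Dividing by $\alpha_0^\tau$ produces the desired decomposition of $\Phi_\rho$, once we verify $\sigma' \cup \{\alpha_0^\tau\} \in \mathcal{B}(\mathfrak{A},\mathfrak{c})$ for each $\sigma'$. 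This holds because $Cone(\sigma' \cup \{\alpha_0^\tau\}) \cap H = Cone(\sigma')$ (the coefficient of $\alpha_0^\tau$ must vanish on $H$), so $\mathfrak{c} \subset Cone(\sigma' \cup \{\alpha_0^\tau\})$ would force $\mathfrak{c}' \subset Cone(\sigma')$, contradicting $\sigma' \in \mathcal{B}(\mathfrak{A}',\mathfrak{c}')$. The main obstacle I expect is the first structural claim pinning down the shape of bases $\rho$ with $\mathfrak{c} \subset Cone(\rho)$; once it is in hand, the rest is bookkeeping powered by Lemma \ref{Lem1} and the induction hypothesis.
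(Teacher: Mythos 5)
Your proposal is correct and follows essentially the same strategy as the paper: reduce to bases $\rho$ with $\mathfrak{c}\subset Cone(\rho)$, show such a basis splits as a basis of the supporting hyperplane of $F$ plus one transversal element, invoke the induction hypothesis inside that hyperplane, and use Lemma \ref{Lem1} to exchange the transversal element for that of $\tau$. The only (immaterial) difference is the order of the last two steps --- you normalize the transversal element first and then descend to the hyperplane, whereas the paper descends first and applies Lemma \ref{Lem1} at the end.
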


	\begin{rk}\label{Rk1}
Let $w$ be an $(r-1)$-dimensional face of $\overline{\mathfrak{c}}$ and let $W$ be its supporting hyperplane. If $\sigma \in \mathcal{B}(\mathfrak{A})$ is in the same closed half-space of $W$ as $\mathfrak{c}$ then $w\subset Cone(\sigma)$ if and only if $\mathfrak{c}\subset Cone(\sigma)$.
	\end{rk}

	\begin{proof}[Proof of Lemma \ref{Lem2}]
Let $W$ be the supporting hyperplane of $F$. Let $w = \overline{\mathfrak{c}}\cap W$, which is an $(r-1)$-dimensional polyhedral cone and its relative interior is contained in a chamber $\mathfrak{c}_{W} \in \mathcal{C}(\mathfrak{A}\cap W)$. 
Let $\rho \in \mathcal{B}(\mathfrak{A})$ such that $\mathfrak{c} \subset Cone(\rho)$, thus $w = \overline{\mathfrak{c}}\cap W \subset Cone(\rho \cap W)$. Moreover, $Cone(\rho \cap W)$ is an $(r-1)$-dimensional face of the simplicial cone $Cone(\rho)$, hence $\rho\cap W$ is a basis of $W$.
Similarly, we have $w \subset Cone(\tau \cap W)$ and $\tau \cap W$ is a basis of $W$.
In particular, $\mathfrak{c}_{W}$ is contained in both $Cone(\tau \cap W)$ and $Cone(\rho\cap W)$. By assumption that Proposition \ref{Prop2}  holds on $W$ we get
$$
\Phi_{\rho \cap W} = a_{\tau \cap W} \Phi_{\tau \cap W} + \sum_{\eta \in \mathcal{B}(\mathfrak{A} \cap W, \mathfrak{c}_{W})} a_{\eta}\Phi_{\eta},
$$
where $a_{\tau \cap W}, a_{\eta} \in \mathbb{R}$. 

If $\beta \in \rho \setminus W$ then $\eta \cup \{\beta\}\in \mathcal{B}(\mathfrak{A})$ and $\Phi_{\rho} = \frac{1}{\beta}\Phi_{\rho \cap W}$, $\Phi_{\eta \cup \{\beta\}} = \frac{1}{\beta} \Phi_{\eta}$, therefore
	\begin{equation}\label{Eq9}
\Phi_{\rho} = a_{\tau \cap W} \Phi_{(\tau \cap W)\cup \{\beta\}} + \sum_{\eta \in \mathcal{B}(\mathfrak{A} \cap W, \mathfrak{c}_{W})} a_{\eta}\Phi_{\eta\cup \{\beta\}}.
	\end{equation}
Since $w \subset \overline{\mathfrak{c}_{W}}\not \subset Cone(\eta)$ we have $w \not \subset Cone(\eta\cup \{\beta\})$, hence $\mathfrak{c} \not \subset  Cone(\eta \cup \{\beta\})$ by Remark \ref{Rk1}. Therefore, $\eta \cup \{\beta\} \in \mathcal{B}(\mathfrak{A},\mathfrak{c})$ for all $\eta \in \mathcal{B}(\mathfrak{A}\cap W,\mathfrak{c}_{W})$.  
Similarly, by Remark \ref{Rk1} we have $\mathfrak{c} \subset  Cone((\tau \cap W) \cup \{\beta\})$, because $w \subset Cone(\tau \cap W)$. 
%{\color{Red}
We apply Lemma \ref{Lem1} for elements of $\{\beta\}\cup \tau$ to get
$$
\Phi_{(\tau\cap W)\cup \{\beta\}} 
= 
b_{\tau}\Phi_{\tau} + \sum_{\sigma \in \mathcal{B}(\mathfrak{A},\mathfrak{c})} b_{\sigma} \Phi_{\sigma}
$$
and in conjuction with (\ref{Eq9}) the lemma follows.
%}
\end{proof}

\begin{proof}[Proof of Proposition \ref{Prop2}]
The proposition is trivial when $\dim V^{*}=1$. Assume that it holds if $\dim V^{*}<r$. We will show that it also holds when $\dim V^{*}=r$. To do so we use wall-crossing: there is a chain of chambers $\mathfrak{c}_{1},\ldots,\mathfrak{c}_{m}=\mathfrak{c}$ in $Cone(\mathfrak{A})$ such that $\mathfrak{c}_{1}$ is as in Lemma \ref{Lem2} and each $\overline{\mathfrak{c}_{i-1}}\cap \overline{\mathfrak{c}_{i}}$ is an $(r-1)$-dimensional polyhedral cone, moreover by Lemma \ref{Lem2} it is enough to show that if the proposition holds for the chamber $\mathfrak{c}_{i-1}$ then it also holds for $\mathfrak{c}_{i}$. 

Remark that we may choose freely the fixed basis $\tau\in \mathcal{B}(\mathfrak{A})$ with $\mathfrak{c} \subset Cone(\tau)$: if $\tau'\in \mathcal{B}(\mathfrak{A})\setminus \mathcal{B}(\mathfrak{A},\mathfrak{c})$ and $\Phi_{\tau'} = a'_{\tau}\Phi_{\tau} + \sum_{\sigma\in \mathcal{B}(\mathfrak{A},\mathfrak{c})} a'_{\sigma}\Phi_{\sigma}$ then $a'_{\tau}\neq 0$ since $\textnormal{JK}_{\varepsilon}(\Phi_{\tau'}) = \textnormal{JK}_{\varepsilon}(a'_{\tau}\Phi_{\tau})\neq0$ for any $\varepsilon\in \mathfrak{c}$.
Let $\tau_{i-1},\tau_{i}\in \mathcal{B}(\mathfrak{A})$ such that $\mathfrak{c}_{i-1}\subset Cone(\tau_{i-1})$ and $\mathfrak{c}_{i}\subset Cone(\tau_{i})$. 

In the case when the inclusion $\mathfrak{c}_{i}\subset Cone(\eta)$ implies $\mathfrak{c}_{i-1} \subset Cone(\eta)$ for all $\eta \in \mathcal{B}(\mathfrak{A})$ we have $\mathcal{B}(\mathfrak{A},\mathfrak{c}_{i}) = \mathcal{B}(\mathfrak{A},\mathfrak{c}_{i-1})$ and we  will choose $\tau_{i}=\tau_{i-1}$. Therefore, the proposition holds for $\mathfrak{c}_{i}$ if it holds for $\mathfrak{c}_{i-1}$. 
We may suppose that there is $\eta \in \mathcal{B}(\mathfrak{A})$ such that $\mathfrak{c}_{i}\subset Cone(\eta)$, but $\mathfrak{c}_{i-1}\not \subset Cone(\eta)$. 
Then $\eta$ has a face which separates chambers $\mathfrak{c}_{i-1}$ and $\mathfrak{c}_{i}$, that is there are $\gamma_{1},\ldots,\gamma_{r-1}\in \eta$ such that the vector space $W$ spanned by them is identical to the supporting hyperplane of the $(r-1)$-dimensional polyhedral cone $w=\overline{\mathfrak{c}_{i-1}}\cap \overline{\mathfrak{c}_{i}}$.
Remark that $w = W\cap \overline{\mathfrak{c}_{i}} \subset Cone(\eta \cap W) = Cone(\gamma_{1},\ldots,\gamma_{r-1})$. 

Since $\mathfrak{c}_{i-1}\subset Cone(\mathfrak{A})$, there is $\gamma_{r+1}\in \mathfrak{A}$ in the same open half-space of $W$ as the chamber $\mathfrak{c}_{i-1}$. By Remark \ref{Rk1} we have $\mathfrak{c}_{i-1}\subset Cone(\gamma_{1},\ldots,\gamma_{r-1},\gamma_{r+1})$, because $w\subset Cone(\gamma_{1},\ldots,\gamma_{r-1})$. If $\gamma_{r}\in \eta \setminus W$ then choose the fixed basis $\tau_{i}$ as $\eta = \{\gamma_{1},\ldots,\gamma_{r} \}$. By Lemma \ref{Lem1} there is a unique $l\in \{1,\ldots,r\}$ such that $\mathfrak{c}_{i}\subset  Cone(\gamma_{1},\ldots,\widehat{\gamma_{l}},\ldots,\gamma_{r},\gamma_{r+1})$. Remark that $l\neq r$ and we may assume that $l=1$. Moreover, $w$ intersects the interior of $Cone(\gamma_{2},\ldots,\gamma_{r+1})$, hence $\mathfrak{c}_{i-1}\subset Cone(\gamma_{2},\ldots,\gamma_{r+1})$, thus we choose $\tau_{i-1}=\{\gamma_{2},\ldots,\gamma_{r+1}\}$. 
By Lemma \ref{Lem1} we have
	\begin{equation}\label{Eq4}
\Phi_{\tau_{i-1}} = b_{\tau_{i}}\Phi_{\tau_{i}} + \sum_{\varrho \in \mathcal{B}(\mathfrak{A},\mathfrak{c}_{i})}b_{\varrho}\Phi_{\varrho}.
	\end{equation}
%
%
%We have a relation between $\Phi_{\tau_{i-1}}$ and $\Phi_{\tau_{i}}$: if $\gamma_{1} = \sum_{i=2}^{r+1}g_{i}\gamma_{i}$ then 
%	\begin{equation}\label{Eq4}
%\Phi_{\tau_{i-1}} = \frac{1}{\gamma_{2}\cdots \gamma_{r+1}} = \frac{g_{2}}{\gamma_{1}\gamma_{3}\cdots \gamma_{r+1}} + \ldots + \frac{g_{r}}{\gamma_{1}\cdots\gamma_{r-1}\gamma_{r+1}} + g_{r+1}\Phi_{\tau_{i}},
%	\end{equation}
%and for all $l=2,\ldots,r$ we have that $\frac{1}{\gamma_{1}\cdots \widehat{\gamma_{l}}\cdots \gamma_{r+1}} \in \mathcal{B}(\mathfrak{A},\mathfrak{c}_{i})$ by Lemma \ref{Lem1}.
Let $\rho\in \mathcal{B}(\mathfrak{A})$ any basis such that $\mathfrak{c}_{i}\subset Cone(\rho)$. If $\mathfrak{c}_{i-1}\subset Cone(\rho)$  then by assumption
	\begin{equation}\label{Eq5}
\Phi_{\rho} = c_{\tau_{i-1}} \Phi_{\tau_{i-1}} + \sum_{\sigma \in \mathcal{B}(\mathfrak{A},\mathfrak{c}_{i-1})} c_{\sigma}\Phi_{\sigma}.
	\end{equation}
Hence by (\ref{Eq4}) and (\ref{Eq5}) we have to only deal with the case when $\mathfrak{c}_{i}\subset Cone(\rho)$, but $\mathfrak{c}_{i-1}\not\subset Cone(\rho)$. As in the above case of $\eta$ we can show that $\rho$ is in the closed half-space $W^{+}$ of $W$ which contains $\mathfrak{c}_{i}$. Then 
the proposition follows from Lemma \ref{Lem2}, because $\tau_{i},\rho\in \mathcal{B}(\mathfrak{A}\cap W^{+})$. 
\end{proof}

\begin{proof}[Proof of Theorem \ref{C}]
For any $\prod_{j\in J}\alpha_{j}$ such that $\varepsilon\notin Cone(\alpha_{i} \,|\, i\in N\setminus J)$ we have 
$$
\textnormal{JK}_{\varepsilon}\left( \frac{ Q\prod_{j\in J}\alpha_{j} }{ \prod_{i\in N} \alpha_{i} }\right) 
= 
\textnormal{JK}_{\varepsilon} \bigg( \frac{Q}{\prod_{i\in N\setminus J}\alpha_{i}}\bigg) 
=
0
$$ 
for any $Q$ by property (P1). Hence, if $P\in \mathcal{I}_{\mathfrak{A},\varepsilon}$ then $\displaystyle\textnormal{JK}_{\varepsilon} \left( \frac{P}{\prod_{i\in N}\alpha_{i}} \right) = 0$.

We prove the other direction by induction on the number of elements $n$ in the list $\mathfrak{A}$. Consider the case $n=r$.
 Since $\varepsilon\in Cone(\mathfrak{A})$ then $\displaystyle\textnormal{JK}_{\varepsilon} \left( \frac{P}{\prod_{i=1}^{r}\alpha_{i}} \right)$ implies that $P=0$, therefore $P\in \mathcal{I}_{\mathfrak{A},\varepsilon}$.
 
 Suppose that the theorem holds when $\mathfrak{A}$ contains less than $n$ vectors. We may also assume that $\varepsilon \in Cone(\alpha_{1},\ldots,\alpha_{r})$, in particular $\{\alpha_{1},\ldots,\alpha_{r}\}$ is a basis of $V^{*}$. Therefore, we can write 
 $$P = \sum_{i=1}^{r} \alpha_{i}P_{i},$$
  where $P_{i}\in \mathbb{R}[V]$ are homogeneous of degree $n-r-1$ for all $i=1,\ldots,r$. We make the assumption that $\varepsilon \in Cone(\alpha_{1},\ldots, \widehat{\alpha_{i}},\ldots,\alpha_{n})$ if $1 \leq i \leq q$ and $\varepsilon \notin Cone(\alpha_{1},\ldots,\widehat{\alpha_{i}},\ldots,\alpha_{n})$ if $q < i \leq r$. Then for each $i\in \{1,\ldots,q\}$ there exists $K_{i}\subset \{1,\ldots,\widehat{i},\ldots,n\}$  such that $\{\alpha_{k} \,|\, k\in K_{i}\}\in \mathcal{B}(\mathfrak{A})$ and $\varepsilon \in Cone(\alpha_{k} \,|\, k\in K_{i})$, that is $\textnormal{JK}_{\varepsilon} \bigg( \dfrac{1}{\prod_{k\in K_{i}}\alpha_{k}} \bigg) \neq 0$. Hence, we can find $c_{i}\in \mathbb{R}$ such that
$$
\textnormal{JK}_{\varepsilon} \left( \frac{\alpha_{i}P_{i}-c_{i}\prod_{l\notin K_{i}}\alpha_{l}}{\prod_{i=1}^{n}\alpha_{i}} \right) 
= 
\textnormal{JK}_{\varepsilon} \left( \frac{\alpha_{i}P_{i}}{\prod_{i=1}^{n}\alpha_{i}} \right) - c_{i} \textnormal{JK}_{\varepsilon} \left( \frac{1}{\prod_{k\in K_{i}}\alpha_{k}} \right) = 0.
$$ 
In particular, $\displaystyle\textnormal{JK}_{\varepsilon} \bigg( \frac{P_{i}-c_{i}\prod_{l\notin K_{i}\cup \{i\}}\alpha_{l}}{\alpha_{1}\cdots \widehat{\alpha_{i}}\cdots \alpha_{n}} \bigg)=0 $, hence if $\mathfrak{A}_{i}$ denotes the list  $\mathfrak{A}$ with $\alpha_{i}$ removed then 
$P_{i} - c_{i}\prod_{l\notin K_{i}\cup \{i\}} \alpha_{l} \in \mathcal{I}_{\mathfrak{A}_{i},\varepsilon}$ by induction hypothesis. 
Moreover, we have $\alpha_{i} \mathcal{I}_{\mathfrak{A}_{i},\varepsilon} \subset \mathcal{I}_{\mathfrak{A},\varepsilon}$, thus
	\begin{equation}\label{Eq6}
\alpha_{i}P_{i} - c_{i} \prod_{l\notin K_{i}} \alpha_{l} \in \mathcal{I}_{\mathfrak{A},\varepsilon}, \qquad\forall 1\leq i \leq q.
	\end{equation}
For $i\in \{q+1,\ldots,r\}$ we have $\alpha_{i}\in \mathcal{I}_{\mathfrak{A},\varepsilon}$, hence 
	\begin{equation}\label{Eq7}
\alpha_{i}P_{i}\in \mathcal{I}_{\mathfrak{A},\varepsilon}, \qquad \forall q<i\leq r.  
	\end{equation}
From (\ref{Eq6}), (\ref{Eq7}) and decomposition 
	\begin{equation}\label{Eq8}
P 
= 
\sum_{i=1}^{q} \Big( \alpha_{i}P_{i} - c_{i}\prod_{l\notin K_{i}}\alpha_{l} \Big) 
+ 
\sum_{i=q+1}^{r} \alpha_{i}P_{i} 
+ 
\sum_{i=1}^{q} c_{i}\prod_{l\notin K_{i}}\alpha_{l}
	\end{equation}
we can see that $P\in \mathcal{I}_{\mathfrak{A},\varepsilon}$ exactly when $\displaystyle Q= \sum_{i=1}^{q}c_{i}\prod_{l\notin K_{i}}\alpha_{l}\in \mathcal{I}_{\mathfrak{A},\varepsilon}$. By Proposition \ref{Prop2}
$$
\frac{Q}{\prod_{i=1}^{n}\alpha_{i}} = \sum_{i=1}^{q} \frac{c_{i}}{\prod_{k\in K_{i}}\alpha_{k}} = \frac{a}{\prod_{i=1}^{r}\alpha_{i}} + \sum_{\sigma \in \mathcal{B}(\mathfrak{A},\mathfrak{c}_{\varepsilon})}\frac{a_{\sigma}}{\prod_{\alpha_{i} \in \sigma}\alpha_{i}},
$$
where $\mathfrak{c}_{\varepsilon}$ is the chamber containing $\varepsilon$. Remark that $\prod\limits_{\alpha_{i} \in \mathfrak{A}\setminus \sigma}\alpha_{i} \in \mathcal{I}_{\mathfrak{A},\varepsilon}$, hence $\displaystyle\textnormal{JK}_{\varepsilon} \left( \frac{1}{\prod_{\alpha_{i}\in \sigma}\alpha_{i}} \right) = 0$ for all $\sigma \in \mathcal{B}(\mathfrak{A},\mathfrak{c}_{\varepsilon})$. Therefore,
\begin{align*}
a \textnormal{JK}_{\varepsilon} \left( \frac{1}{\prod_{i=1}^{r}\alpha_{i}} \right) 
&= 
\textnormal{JK}_{\varepsilon} \left( \frac{a}{\prod_{i=1}^{r}\alpha_{i}} + \sum_{\sigma\in \mathcal{B}(\mathfrak{A},\mathfrak{c}_{\varepsilon})}\frac{a_{\sigma}}{\prod_{\alpha_{i} \in \sigma}\alpha_{i}} \right) 
\\
&= 
\textnormal{JK}_{\varepsilon} \left( \frac{Q}{\prod_{i=1}^{n}\alpha_{i}} \right) = \textnormal{JK}_{\varepsilon} \left( \frac{P}{\prod_{i=1}^{n}\alpha_{i} } \right) 
= 
0,
\end{align*}
by (\ref{Eq6}), (\ref{Eq7}) and (\ref{Eq8}). Moreover, $\displaystyle\textnormal{JK}_{\varepsilon}\left(\frac{1}{\prod_{i=1}^{r}\alpha_{i}}\right) \neq 0$ implies that $a=0$, hence 
$$
Q = \sum_{\sigma \in \mathcal{B}(\mathfrak{A},\mathfrak{c}_{\varepsilon})}a_{\sigma} \prod_{\alpha_{i} \in \mathfrak{A}\setminus\sigma}\alpha_{i} \in \mathcal{I}_{\mathfrak{A},\varepsilon}.
$$

%\newpage

To prove the second part of the theorem, denote $H_{\mathfrak{A},\varepsilon}$ the ideal generated by $\prod_{\alpha_{i}\in H^{+}}\alpha_{i}$ such that $H$ is a hyperplane spanned by a subset of $\mathfrak{A}$ and $H^{+}$ denotes the open half-space containing $\varepsilon$. It is easy to see that $H_{\mathfrak{A},\varepsilon}\subset \mathcal{I}_{\mathfrak{A},\varepsilon}$. To show the reverse inclusion let $\prod_{j\in J}\alpha_{j} \in \mathcal{I}_{\mathfrak{A},\varepsilon}$, i.e. $J\subset N$ is a subset such that $\varepsilon\notin Cone(\alpha_{j}\,|\, j\in N\setminus J)$. We can suppose that $J$ is minimal, hence the latter convex polyhedral cone is at least $(r-1)$-dimensional. Therefore, it has an $(r-1)$-dimensional face such that the underlying hyperplane $H$ separates $\varepsilon$ from this convex polyhedral cone. Moreover, $H$ is spanned by some $\alpha_{i}$'s and $\{\alpha_{j}\,|\, j\in N \setminus J\} \subset \{\alpha_{j}\,|\,\alpha_{j}\notin H^{+}\}$, i.e. $\{\alpha_{j}\,|\, \alpha_{j}\in H^{+} \} \subset \{ \alpha_{j} \,|\, j\in J \}$, hence $\prod_{\alpha_{j}\in H^{+}}\alpha_{j}$ divides $\prod_{j\in J}\alpha_{j}$, therefore $\prod_{j\in J}\alpha_{j}\in H_{\mathfrak{A},\varepsilon}$.
\end{proof}
%%%
\begin{rk*}
	The ideal $\mathcal{I}_{\mathfrak{A},\varepsilon}$ is the image of the Stanley-Reisner ideal $I_{\Sigma}$ of a fan $\Sigma$ under the map $\pi:\mathbb{R}[y_{1},\ldots,y_{n}]\to \mathbb{R}[x_{1},\ldots,x_{r}]$, $y_{i}\mapsto \alpha_{i}$. To construct the fan $\Sigma$ consider the short exact sequence
$$
\xymatrix{0 \ar[r] & \mathbb{R}^{n-r} \ar[r]^{\delta} & \mathbb{R}^{n} \ar[r]^{\gamma} & \mathbb{R}^{r} \ar[r] & 0}
$$
where $\gamma$ sends the standard basis element $e_{i}$ to $\alpha_{i}$. Its Gale dual
$$
\xymatrix{0 \ar[r] & \mathbb{R}^{r} \ar[r]^{\gamma^{*}} & \mathbb{R}^{n} \ar[r]^{\delta^{*}} & \mathbb{R}^{n-r} \ar[r] & 0 }
$$
comes with vectors $\beta_{1},\ldots,\beta_{n}\in \mathbb{R}^{n-r}$ as $\beta_{i}=\delta^{*}(e_{i})$. The fan $\Sigma$ is given as follows: for a subset $J\subset N$ the $Cone(\beta_{j}\,|\, j\in J)\in \Sigma$ if and only if $\varepsilon\in Cone(\alpha_{i}\,|\, i\notin J)$. Since $\varepsilon$ is regular $\Sigma$ is a simplicial fan. To the fan $\Sigma$ we associate the Stanley-Reisner ideal $I_{\Sigma} = \big\langle \prod_{j\in J}y_{j}\ |\ Cone(\beta_{j}\,|\, j\in J)\notin \Sigma \big\rangle \subset \mathbb{R}[y_{1},\ldots,y_{n}]$.
\end{rk*}
\begin{example}
In the $r=2$ case $\mathbb{R}\varepsilon$ is a hyperplane of $V^{*}$ and suppose that vectors $\alpha_{1},\ldots, \alpha_{k}$ and $\alpha_{k+1},\ldots,\alpha_{n}$ lie in different components of $V^{*}\setminus \mathbb{R}\varepsilon$. Then $\mathcal{I}_{\mathfrak{A},\varepsilon}=\left \langle\alpha_{1} \cdots \alpha_{k},\, \alpha_{k+1} \cdots \alpha_{n}\right\rangle$.
\end{example}
Finally, the Main Lemma and Theorem \ref{C} with the remark preceding it give the following result about computation of Jeffrey-Kirwan residues.
	\begin{corollary*}
Let $\mathfrak{A}=[\alpha_{1},\ldots,\alpha_{n}]$ be a collection of non-zero vectors in the $r$-dimensional vector space $V^{*}$. Let $\Lambda$ be a connected component of $V\setminus \bigcup_{i=1}^{n}\alpha_{i}^{\perp}$,  suppose that $\alpha_{1},\ldots,\alpha_{n}\in \Lambda^{\vee}$ and $\varepsilon\in V^{*}$ is regular with respect to $\mathfrak{A}$. Let $J\subset\{1,\ldots,n\}$ be a subset such that $\{ \alpha_{j}\,|\, j\in J\}$ is a basis of $V^{*}$ with $\varepsilon\in Cone(\alpha_{j}\,|\, j\in J)$. 
Then
$$
\textnormal{JKRes}^{\Lambda} \frac{P(x)e^{\varepsilon(x)}}{\prod_{i=1}^{n}\alpha_{i}(x)} \textnormal{d} x  
= 
\textnormal{JK}_{\varepsilon} \left(\frac{P}{\prod_{i=1}^{n}\alpha_{i}} \right) 
= 
\frac{1}{\sqrt{\det[(\alpha_{i},\alpha_{j})]_{i,j\in J}}} \cdot \frac{N_{\mathcal{I}_{\mathfrak{A},\varepsilon}}(P)}{N_{\mathcal{I}_{\mathfrak{A},\varepsilon}}(\prod_{i\notin J}\alpha_{i})} 
$$
for all $P\in \mathbb{R}[V]$ homogeneous of degree $n-r$.
	\end{corollary*}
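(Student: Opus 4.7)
The plan is to apply the Main Lemma with the linear functional $\lambda: \mathbb{R}[V]_{n-r}\to \mathbb{R}$ defined by $\lambda(P) = \textnormal{JK}_{\varepsilon}\bigl(P/\prod_{i=1}^{n}\alpha_{i}\bigr)$ and with the homogeneous ideal $I = \mathcal{I}_{\mathfrak{A},\varepsilon}$. The first equality in the statement of the corollary is just equation (\ref{Eq10}) together with property (P2), so all the content is in the second equality.

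To invoke the Main Lemma I would verify its two hypotheses, namely that $\lambda$ is non-zero and that $\ker \lambda = \mathcal{I}_{\mathfrak{A},\varepsilon}\cap \mathbb{R}[V]_{n-r}$. The second condition is exactly the statement of Theorem \ref{C}. For the first, I would take
$$
\Delta \;=\; \prod_{i\notin J}\alpha_{i},
$$
which is homogeneous of degree $n-|J|=n-r$ since $|J|=r$. Then, by Definition \ref{Def1} and using that $\{\alpha_{j}\,|\,j\in J\}$ is a basis with $\varepsilon\in Cone(\alpha_{j}\,|\,j\in J)$,
$$
\lambda(\Delta) \;=\; \textnormal{JK}_{\varepsilon}\!\left(\frac{\prod_{i\notin J}\alpha_{i}}{\prod_{i=1}^{n}\alpha_{i}}\right) \;=\; \textnormal{JK}_{\varepsilon}\!\left(\frac{1}{\prod_{j\in J}\alpha_{j}}\right) \;=\; \frac{1}{\sqrt{\det[(\alpha_{i},\alpha_{j})]_{i,j\in J}}} \;\neq\; 0,
$$
so $\lambda$ is non-zero as well, and both hypotheses of the Main Lemma are satisfied.

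Applying the Main Lemma to $P\in \mathbb{R}[V]_{n-r}$ then immediately yields
$$
\lambda(P) \;=\; \lambda(\Delta)\cdot \frac{N_{\mathcal{I}_{\mathfrak{A},\varepsilon}}(P)}{N_{\mathcal{I}_{\mathfrak{A},\varepsilon}}(\Delta)} \;=\; \frac{1}{\sqrt{\det[(\alpha_{i},\alpha_{j})]_{i,j\in J}}}\cdot \frac{N_{\mathcal{I}_{\mathfrak{A},\varepsilon}}(P)}{N_{\mathcal{I}_{\mathfrak{A},\varepsilon}}\!\bigl(\prod_{i\notin J}\alpha_{i}\bigr)},
$$
which is the claimed formula. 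There is no real obstacle: the substantive work has been done upstream, in Theorem \ref{C} (identifying the homogeneous ideal which cuts out $\ker\lambda$) and in the Main Lemma (turning any such identification into a ratio of Gr\"obner-basis normal forms). The corollary is the clean packaging of those two inputs, together with the explicit evaluation $\lambda(\Delta)$ coming directly from Definition \ref{Def1}.
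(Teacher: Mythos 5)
Your proposal is correct and follows exactly the route the paper intends: the corollary is deduced by applying the Main Lemma to $\lambda(P)=\textnormal{JK}_{\varepsilon}\bigl(P/\prod_{i=1}^{n}\alpha_{i}\bigr)$ with $I=\mathcal{I}_{\mathfrak{A},\varepsilon}$ (whose degree-$(n-r)$ part equals $\ker\lambda$ by Theorem \ref{C}) and $\Delta=\prod_{i\notin J}\alpha_{i}$, whose value is read off from Definition \ref{Def1}. Nothing is missing.
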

\noindent
Moreover, $\displaystyle\textnormal{JKRes}^{\Lambda} \frac{P(x) e^{\varepsilon(x)}}{\prod_{i=1}^{n}\alpha_{i}(x)}   \textnormal{d} x$ can be computed with the following algorithm:
{
\renewcommand{\labelenumi}{(\arabic{enumi})}
\begin{enumerate}
\item 
Compute the ideal $\mathcal{I}_{\mathfrak{A},\varepsilon}$ using Theorem \ref{C}.

\item 
Find a $J\subset \{1,\ldots,n\}$ such that $\{\alpha_{j}\, |\, j\in J\}$ is a basis of $V^{*}$ with $\varepsilon\in Cone (\alpha_{j}\, |\, j\in J )$.
\item 
Compute a Gr\"obner basis $G$ of $\mathcal{I}_{\mathfrak{A},\varepsilon}$.

\item 
Compute the remainders (normal forms) $N_{\mathcal{I}_{\mathfrak{A},\varepsilon}}(P)$ and $N_{\mathcal{I}_{\mathfrak{A},\varepsilon}}(\prod_{i\notin J}\alpha_{i})$ of the division of $P$ and $\prod_{i\notin J}\alpha_{i}$ by $G$, respectively.

\item 
Then 
$\displaystyle\textnormal{JKRes}^{\Lambda} \frac{P(x)e^{\varepsilon(x)}}{\prod_{i=1}^{n}\alpha_{i}(x)} \textnormal{d} x  
=  
\frac{1}{\sqrt{\det[(\alpha_{i},\alpha_{j})]_{i,j\in J}}} 
\cdot 
\frac{N_{\mathcal{I}_{\mathfrak{A},\varepsilon}}(P)}{N_{\mathcal{I}_{\mathfrak{A},\varepsilon}}(\prod_{i\notin J}\alpha_{i})}. 
$
\end{enumerate}
}
	\begin{rk*} The remainders can be computed by computer programs like Maple or Macaulay2 as follows.
Let $I$ be an ideal of $\mathbb{R}[x_{1},\ldots,x_{r}]$.
In Maple, first compute the Gr\"obner basis $G$ of $I$:
\begin{verbatim}
with(Groebner): G=Basis(I,tdeg(x_1,...,x_r))
\end{verbatim}
Then compute the remainder of the division of $f$ by $G$:
\begin{verbatim}
NormalForm(f,G,tdeg(x_1,...,x_r))
\end{verbatim}
In Macaulay2 we can compute the remainder directly:
\begin{verbatim}
f % I
\end{verbatim}
	\end{rk*}

%:References

\end{document}